\newcommand{\bi}{\begin{itemize}}
\newcommand{\ei}{\end{itemize}}
\newcommand{\ben}{\begin{enumerate}}
\newcommand{\een}{\end{enumerate}}
\newcommand{\be}{\begin{equation}}
\newcommand{\ee}{\end{equation}}
\newcommand{\bea}{\begin{eqnarray}} 
\newcommand{\eea}{\end{eqnarray}}
\newcommand{\ba}{\begin{align}} 
\newcommand{\ea}{\end{align}}
\newcommand{\bse}{\begin{subequations}} 
\newcommand{\ese}{\end{subequations}}
\newcommand{\bc}{\begin{center}}
\newcommand{\ec}{\end{center}}
\newcommand{\bfi}{\begin{figure}}
\newcommand{\efi}{\end{figure}}
\newcommand{\ca}[2]{\caption{#1 \label{#2}}}
\newcommand{\ig}[2]{\includegraphics[#1]{#2}}
\newcommand{\bmp}[1]{\begin{minipage}{#1}}
\newcommand{\emp}{\end{minipage}}
\newcommand{\pig}[2]{\bmp{#1}\includegraphics[width=#1]{#2}\emp}
\newcommand{\tbox}[1]{{\mbox{\tiny #1}}}
\newcommand{\mbf}[1]{{\mathbf #1}}
\newcommand{\half}{\mbox{\small $\frac{1}{2}$}}
\newcommand{\RR}{\mathbb{R}}
\newcommand{\ZZ}{\mathbb{Z}}
\newcommand{\eps}{\varepsilon}
\newcommand{\bigO}{{\mathcal O}}
\DeclareMathOperator{\re}{Re}
\DeclareMathOperator{\im}{Im}
\DeclareMathOperator{\sinc}{sinc}
\newtheorem{thm}{Theorem}
\newtheorem{cnj}[thm]{Conjecture}
\newtheorem{lem}[thm]{Lemma}
\newtheorem{rmk}[thm]{Remark}
\newcommand{\freq}{\beta}          %
\newcommand{\rat}{\sigma}          %
\newcommand{\ppsi}{{\tilde\psi}}   %
\newcommand{\al}{\alpha}           %
\newcommand{\NU}{{nonuniform}}       %
\newcommand{\KB}{Kaiser--Bessel}
\newcommand{\FT}{Fourier transform}
\begin{document}

\title{Aliasing error of the exp$(\beta \sqrt{1-z^2})$ kernel
  in the nonuniform fast Fourier transform}

\author{Alex H. Barnett%
  \thanks{Center for Computational Mathematics, Flatiron Institute, Simons Foundation, New York, NY, USA}
}
\date{\today}
\maketitle
\begin{abstract}
  The most popular algorithm for the
   \NU\ fast Fourier transform 
   (NUFFT) uses the dilation of a kernel $\phi$ to spread (or interpolate)
   between given \NU\ points and a uniform upsampled grid,
  combined with an FFT and diagonal scaling
  (deconvolution) in frequency space.
  The high performance of the recent FINUFFT library
  is in part due to its use of
  a new ``exponential of semicircle'' kernel
  $\phi(z)=e^{\beta \sqrt{1-z^2}}$, for $z\in[-1,1]$, zero otherwise, whose
  Fourier transform $\hat\phi$ is unknown analytically.
  We place this kernel on a rigorous footing
  by proving an aliasing error estimate
  which 
  bounds the error of the
  one-dimensional NUFFT of types 1 and 2
  in exact arithmetic.
  Asymptotically in the kernel width measured in upsampled grid points,
  the error is shown to decrease with an exponential rate
  arbitrarily close to
  that of the popular Kaiser--Bessel kernel.
  This requires controlling a
  conditionally-convergent
  sum over the tails of $\hat\phi$,
  using steepest descent, other classical estimates on contour integrals,
  and a phased sinc sum.
  We also draw new connections between the above kernel,
  Kaiser--Bessel, and prolate spheroidal wavefunctions of order zero,
  which all appear to share an optimal exponential convergence rate.
\end{abstract}

\section{Introduction and main result} %
\label{s:intro}

The NUFFT computes exponential sums involving
arbitrary off-grid source or target points,
at speeds scaling like those of the FFT for regular grids.
It has a wide range of applications, including
magnetic resonance imaging \cite{jackson91,fessler,pynufft},
computed tomography \cite{fourmont},
optical coherence tomography \cite{octnufft},
synthetic aperture radar \cite{andersson12},
spectral interpolation between grids \cite[Sec.~6]{usingnfft} \cite{gimbutasgrid}, and electrostatics in molecular dynamics \cite{nestlerPME,shamshirgar};
for reviews see \cite{usingnfft,nufft,finufft}.
Given \NU\ points $x_j$, $j=1,\ldots,M$,
which may be taken to lie in $[-\pi,\pi)$,
complex strengths $c_j$, and a bandwidth $N\in2\mathbb{N}$,
the 1D type 1 NUFFT computes the $N$ outputs
\be
f_k :=
\sum_{j=1}^M c_j e^{i k x_j}~,
\quad %
-N/2\le k < N/2 ~.
\label{1}
\ee
The type 2 is the adjoint of this operation:
it computes at arbitrary real targets $x_j$ the $N$-term Fourier series with given
coefficients $f_k$,
  \be
  c_j := \sum_{-N/2\le k < N/2} f_k e^{-i k x_j}~,
  \quad j=1,\dots, M~.
\label{2}
\ee
The above naturally generalize to dimension $d>1$, and there are
related flavors of forward and inverse tasks that we will not address here
\cite{dutt,usingnfft}.
Naively the sums \eqref{1} or \eqref{2} require $\bigO(NM)$ work
($N$ being the total number of modes in cases with $d>1$);
NUFFT algorithms approximate them
to a user-specified relative tolerance $\eps$
with typically only $\bigO\bigl(M(\log 1/\eps)^d + N \log N\bigr)$ work.

The most popular algorithm (see, e.g., \cite{fessler,usingnfft,finufft}) for the type 1,
taking the 1D case, fixes a fine grid with
nodes $2\pi l /n$, for $l=0,\dots,n-1$, where $n = \sigma N$, and $\sigma>1$
is an upsampling parameter.
The data is first spread to a vector $\{b_l\}_{l=0}^{n-1}$ living on this grid via
\be
b_l = \sum_{j=1}^M c_j \tilde\psi(2\pi l/n - x_j)~, \qquad %
l=0,\dots,n-1~,
\label{bl}
\ee
where $\tilde\psi(x) := \sum_{m\in\ZZ} \psi(x-2\pi m)$ is the periodization
of a ``scaled'' (dilated) %
kernel $\psi(x) := \phi(nx/\pi w)$,
where $\phi$ is some ``unscaled'' kernel, meaning its support is $[-1,1]$.
Thus $w\in\mathbb{N}$ is the scaled kernel width in fine grid points.
The output approximations $\tilde f_k \approx f_k$ to \eqref{1} are then
\be
\tilde f_k = p_k \sum_{l=0}^{n-1} e^{2\pi i lk/n} b_l ~, \qquad %
-N/2\le k < N/2~,
\label{pb}
\ee
which is computed via a size-$n$ FFT followed by truncation to the
desired output indices.
The total effort is thus $\bigO(M w + \sigma N \log N)$,
or $\bigO(M w^d + \sigma^d N \log N)$ for $d>1$.
The ``deconvolution'' factors $p_k$ in \eqref{pb} are designed to undo
the kernel convolution in \eqref{bl}, thus are usually
chosen as
\be
p_k = \frac{2\pi}{n\hat\psi(k)} = \frac{2}{w\hat\phi(\pi w k/n)}~,
\qquad -N/2\le k < N/2
\label{pk}
~,
\ee
where $\hat{}$\, indicates the Fourier transform according to the definition
$\hat\psi(k) = \int_{-\infty}^\infty \psi(x) e^{ikx} dx$.
The type 2 performs the adjoints of the above steps in reverse order, to give
approximations $\tilde c_j \approx c_j$ to \eqref{2}.
The goal of this paper is, given $\sigma$, $w$, and a particular kernel $\phi(z)$, to bound the
output errors $\tilde f_k - f_k$ or $\tilde c_j-c_j$ of this algorithm
in exact arithmetic.
These errors are due to {\em aliasing}, that is, the size of $\hat\psi(k)$
in the ``tails'' $|k|\ge n-N/2$ relative to its %
size in the output band $|k|\le N/2$.

In the earliest NUFFT algorithms with rigorous error analysis,
$\phi$ was a truncated Gaussian \cite{dutt,steidl98}
or B-spline \cite{beylkinnufft},
and exponential (geometric) convergence in $w$ was shown,
i.e.\ the error is $\eps = \bigO(e^{-c_\sigma w})$.
At the standard choice $\sigma=2$,
the rates $c_\sigma$ for both of these kernels correspond to
around $0.5 w$ digits of accuracy \cite{nufft}.
However, because of the spreading cost $Mw^d$, minimizing $w$ for a given
$\sigma$ and $\eps$ is crucial.
It was thus a (perhaps surprising) discovery of Logan and Kaiser \cite{kaiser,kaiserinterview,jackson91} that the ``\KB'' (KB) Fourier transform pair
\cite{dftsubmat}
(plotted in green in Fig.~\eqref{f:kernel})
\be
\phi_{\tbox{KB},\freq}(z) := \left\{
\begin{array}{ll}\frac{I_0(\freq\sqrt{1-z^2})}{I_0(\freq)}, & |z|\le 1\\
  0~,& \mbox{otherwise,}\end{array}\right.
\qquad
\hat\phi_{\tbox{KB},\freq}(\xi) = \frac{2}{I_0(\freq)}
\frac{\sinh \sqrt{\freq^2-\xi^2}}{\sqrt{\freq^2-\xi^2}}
~, \quad \xi\in\RR~,
\label{KBpair}
\ee
where $I_0$ is the modified Bessel function \cite[(10.25.2)]{dlmf},
has a much higher rate, achieving over
$0.9 w$ digits of accuracy (for an optimal scaling of parameter $\beta$ with $w$)
at $\sigma=2$; see \eqref{KBerr}.
The rigorous analysis, due to Fourmont~\cite{fourmontthesis,fourmont}, is subtle.
This rate is the conjectured best possible
(see Sec.~\ref{s:optim}),
namely that of the {\em prolate spheroidal wavefunction} (PSWF) of order zero
\cite{SlepianI,osipov}, the latter being the
minimizer of the tail mass $\|\hat\phi\|_{L^2(\{|\xi|>\freq\})}$ over functions
$\phi$ with support $[-1,1]$.
Thus (and because it is simpler to evaluate that the PSWF),
KB is popular in NUFFT code libraries,
either in ``forward mode'' (spreading with $\phi_{\tbox{KB},\freq}$)
\cite{MIRT,BART,pynufft},
or ``backward mode'' (spreading with a truncated $\hat\phi_{\tbox{KB},\freq}$)
\cite{usingnfft}.

Recently the author and coworkers released a library \cite{finufft}
whose high speed is in part due to the use of a new ``exponential
of semicircle'' (ES) kernel,
\be
\phi_{\tbox{ES},\freq}(z) :=
\left\{\begin{array}{ll}
e^{\freq (\sqrt{1-z^2}-1)}, & |z|\le 1~,\\
0, & \mbox{otherwise}~,
\end{array}
\right.
\label{ES}
\ee
being simpler than either KB or PSWF, yet
empirically having the same optimal rate and very similar errors.
The ES kernel has since been used to accelerate Spectral Ewald codes
for periodic electrostatic sums \cite{shamshirgar},
and fast gridding in radio interferometry \cite{arras20}.
Its Fourier transform $\hat\phi_{\tbox{ES},\freq}(\xi)$ is not known analytically,
yet is easily evaluated by quadrature \cite[Sec.~3.1.1]{finufft}.
The above three kernels are compared in Fig.~\ref{f:kernel} for
small and large $\beta$ parameters.

\bfi[t!]  %
\ig{width=\textwidth}{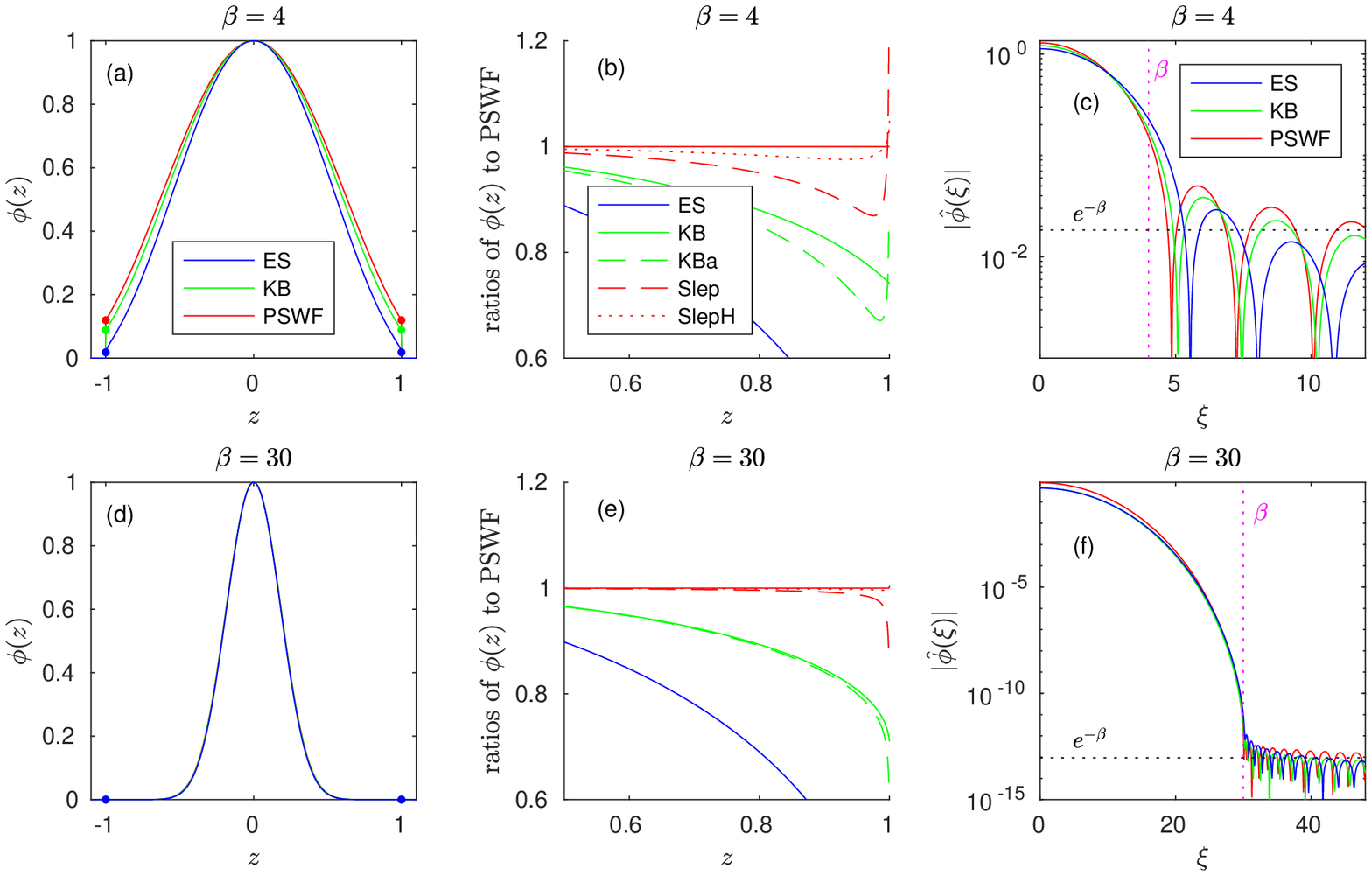}
\ca{Comparison of the three unscaled spreading kernels $\phi(z)$ on $[-1,1]$:
  exponential of semicircle \eqref{ES} (ES, blue), \KB\ \eqref{KBpair}
  (KB, green),
  and the PSWF of order zero (blue; see Sec.~\ref{s:optim}).
  (a) The three kernels for parameter $\freq=4$.
  Discontinuities at $\pm 1$ are shown by dots.
  (b) Ratios to the PSWF, i.e.\ $\phi(z)/\psi_0(z)$, for:
the other two kernels (solid lines), and asymptotic approximations
  ``KBa'' \eqref{KBasym} (dashed green),
  ``Slep'' (Slepian's upper form in \eqref{Slep}, dashed red),
  and ``SlepH'' (a hybrid form \eqref{SlepH}, dotted red).
  (c) Magnitude of the three kernel Fourier transforms.
  All three have logarithm close to a quarter-ellipse below
  the cutoff frequency $\xi=\freq$ (vertical dotted line), and
  are roughly bounded by $e^{-\freq}$ (horizontal dotted line)
  above cutoff.
  The bottom row (d--f) shows the same as (a--c) but for $\freq=30$.
  In (d) the three kernel graphs are indistinguishable.
}{f:kernel}
\efi    %

The main goal of this paper is to prove the following
aliasing error convergence theorem applying to \eqref{ES},
which places this simple kernel
(and hence algorithms which use it \cite{finufft,shamshirgar})
on a rigorous footing.
The rate will depend on the fixed upsampling factor $\rat>1$.
We also assume, as is usual with KB \cite{fessler},
a kernel parameter $\freq$ proportional to $w$. Specifically,
\be
\freq(\rat,\gamma,w) \;:=\; \gamma \pi w (1-1/2\rat)~,
\label{gam}
\ee
where $\gamma$ is a ``safety factor''.
At $\gamma=1$, the cutoff
(see Fig.~\ref{f:kernel}(c,f))
in $\hat\psi_{\tbox{ES},\beta}$ would coincide with the lowest aliased frequency $k=n-N/2$,
so in practice one sets $\gamma<1$
(see Remark~\ref{r:safety}).
Our main result concerns the constant $\eps_\infty$ in the standard
$\ell^1$-$\ell^\infty$ output error bounds
\be
\max_{-N/2\le k < N/2}|\tilde f_k - f_k| \le \eps_\infty \|\mbf{c}\|_1
\quad \mbox{ (type 1), } \qquad
\max_{1\le j\le M}|\tilde c_j - c_j| \le \eps_\infty \|\mbf{f}\|_1
\quad \mbox{ (type 2), }
\label{1nrm}
\ee
where we use vector notation $\mbf{c}:=\{c_j\}_{j=1}^M$ and $\mbf{f}:=\{f_k\}_{k=-N/2}^{N/2-1}$.

\begin{thm}(stated without proof as \cite[Thm.~7]{finufft}).
  Fix the number of modes $N\in\mathbb{N}$, the upsampled grid size $n>N$
  (hence the upsampling factor $\rat=n/N>1$),
  and the safety factor $\gamma\in(0,1)$.
  Then the constant bounding the
  error \eqref{1nrm} for the 1D type 1 and 2 NUFFT in exact arithmetic,
  using the ES kernel \eqref{ES} with $\freq=\freq(\rat,\gamma,w)$
  defined by \eqref{gam},
  converges with respect to the kernel width $w$ as
  \be
  \eps_\infty \;=\; \bigO\left( \sqrt{w} e^{-\pi w \gamma
    \sqrt{1-1/\rat - (\gamma^{-2}-1)/4\rat^2}}
  \right)
  ~, \qquad w\to \infty ~.
  \label{ESerr}
  \ee
  \label{t:ESerr}
\end{thm}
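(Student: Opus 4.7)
The plan is to adapt the classical aliasing framework of Fourmont's \KB\ analysis \cite{fourmont} to the ES kernel, whose Fourier transform lacks a closed form and must be estimated by steepest descent. Applying Poisson summation to the periodized kernel $\tilde\psi$ in \eqref{bl} and using the deconvolution choice \eqref{pk} (which cancels the $m=0$ mode exactly), one obtains the error identity
\[
\tilde f_k - f_k \;=\; \frac{1}{\hat\psi(k)}\sum_{j=1}^M c_j e^{ikx_j}\bigl[g_k(x_j)-\hat\psi(k)\bigr],
\]
where $g_k(x) := \sum_{m\in\ZZ}\hat\psi(k+mn)e^{imnx}$ is a $2\pi/n$-periodic function. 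The $\ell^1$-$\ell^\infty$ bound then becomes
\[
\eps_\infty \;\le\; \max_{|k|<N/2}\frac{\sup_{x\in\RR}|g_k(x)-\hat\psi(k)|}{|\hat\psi(k)|},
\]
and the type 2 bound follows by the adjoint of \eqref{1}--\eqref{2}. Rescaling by $\hat\psi(\xi)=(\pi w/n)\hat\phi(\pi w\xi/n)$ and setting $\eta := \pi wk/n \in [-\pi w/(2\rat),\pi w/(2\rat)]$, the task reduces to bounding the ratio $\sup_x|\sum_{m\neq0}\hat\phi_{\tbox{ES},\freq}(\eta+m\pi w)e^{imnx}|/|\hat\phi_{\tbox{ES},\freq}(\eta)|$ uniformly in $\eta$.

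For the denominator, I would apply steepest descent to $\hat\phi_{\tbox{ES},\freq}(\xi) = e^{-\freq}\int_{-1}^1 e^{\freq\sqrt{1-z^2}+i\xi z}\,dz$. The phase $S(z)=\freq\sqrt{1-z^2}+i\xi z$ has, for $|\xi|<\freq$, a unique interior saddle at $z_\star = i\xi/\sqrt{\freq^2-\xi^2}$ with saddle value $\sqrt{\freq^2-\xi^2}$. Deforming the real contour onto the steepest-descent path through $z_\star$ and applying a standard Laplace-type expansion yields the uniform lower bound $|\hat\phi_{\tbox{ES},\freq}(\eta)|\ge C w^{-1/2}e^{-\freq+\sqrt{\freq^2-\eta^2}}$ for $|\eta|\le\pi w/(2\rat)$ and $w$ large.

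The main obstacle is the numerator: integrating $\hat\phi_{\tbox{ES},\freq}$ by parts produces a boundary term $2e^{-\freq}\sin\xi/\xi$ from the jump of $\phi_{\tbox{ES},\freq}$ at $\pm1$, and the naive triangle-inequality bound $\sum_{m\neq0}|\hat\phi(\eta+m\pi w)|$ diverges like $\sum 1/|m|$, forcing us to exploit phase cancellation. To handle this conditionally convergent sum, I would split $\hat\phi_{\tbox{ES},\freq}(\xi) = A(\xi) + R(\xi)$ with $A(\xi):=2e^{-\freq}\sin\xi/\xi$ isolating the endpoint jump. The contribution from $A$ yields a \emph{phased sinc sum} whose supremum $\sup_x|\sum_{m\neq0} A(\eta+m\pi w)e^{imnx}|$ is computable in closed form via Herglotz-type cotangent/cosecant identities (the exact form depending on the parity of $w$), giving a uniformly $\bigO(e^{-\freq})$ bound. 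The remainder $R$ is handled by a second integration by parts combined with steepest-descent analysis in the super-cutoff regime $|\xi|>\freq$---where the real saddle of $S$ escapes $[-1,1]$ and the endpoint contributions dominate via classical contour-integral estimates---yielding $|R(\xi)|\le C e^{-\freq}|\xi|^{-3/2}$, so that $\sum_{m\neq0}|R(\eta+m\pi w)|$ is absolutely convergent of size $\bigO(e^{-\freq}/\sqrt{w})$. Combining these gives $\sup_x|\cdots|\le\bigO(e^{-\freq})$; dividing by the denominator bound yields $\bigO(\sqrt{w}\,e^{-\sqrt{\freq^2-\eta^2}})$, maximized at the band edge $|\eta|=\pi w/(2\rat)$. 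A direct algebraic substitution with $\freq=\gamma\pi w(1-1/(2\rat))$ then verifies $\sqrt{\freq^2-\pi^2w^2/(4\rat^2)} = \pi w\gamma\sqrt{1-1/\rat-(\gamma^{-2}-1)/(4\rat^2)}$, matching \eqref{ESerr}. The principal technical hurdle throughout is ensuring that $A$ captures all of the non-absolutely-summable part of $\hat\phi_{\tbox{ES},\freq}$ and that $R$ admits termwise bounds without losing the crucial $e^{-\freq}$ prefactor.
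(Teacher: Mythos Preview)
Your overall architecture---Poisson aliasing identity, steepest descent for the denominator, splitting $\hat\phi=A+R$ with $A(\xi)=2e^{-\freq}\sin\xi/\xi$, and a phased sinc sum for the $A$-part---is essentially the paper's strategy. The cotangent/cosecant closed form for the sinc sum is a nice alternative to the paper's Poisson-based Lemma~\ref{l:Fourmont} (and exploits that $\sin(\eta+m\pi w)=(-1)^{mw}\sin\eta$ for integer $w$), so that step is fine.

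The genuine gap is your remainder bound $|R(\xi)|\le Ce^{-\freq}|\xi|^{-3/2}$ uniformly for $|\xi|>\freq$. This is false. Just above cutoff, say $|\xi|=\freq/\gamma$, the saddle asymptotic \eqref{EShat2} gives $|\hat\phi(\xi)|\asymp e^{-\freq}\freq^{-1/2}$ while $|A(\xi)|\le 2e^{-\freq}/|\xi|\asymp e^{-\freq}\freq^{-1}$, so $|R(\xi)|\asymp e^{-\freq}\freq^{-1/2}$, which exceeds your claimed $Ce^{-\freq}\freq^{-3/2}$ by a full factor of $\freq$. More structurally, the sinc asymptotic $\hat\phi\sim A$ only takes over for $|\xi|\gtrsim\freq^2$ (Remark~\ref{r:decay}); below that, $R\approx\hat\phi$ and no uniform $|\xi|^{-3/2}$ bound with $\freq$-independent constant is available. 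Your ``second integration by parts'' cannot rescue this: after one integration by parts the integrand $g'(z)=-\freq z\,e^{\freq\sqrt{1-z^2}}/\sqrt{1-z^2}$ already has endpoint singularities whose derivative is non-integrable, and the steepest-descent estimate \eqref{EShat2} has an implied constant that blows up as $\rho\to\infty$, so neither route gives the uniform decay you assert.

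The paper closes this gap by a three-range split of the numerator sum in $|\xi_m|$: (i) a \emph{close} range $|\xi_m|<3\freq$ with $\bigO(1)$ terms, each $\bigO(e^{-\freq}\freq^{-1/2})$ by \eqref{EShat2}; (ii) an \emph{intermediate} range $3\freq\le|\xi_m|\le\freq^4$ where a direct contour bound (Lemma~\ref{l:EShat4}) gives $|\hat\phi(\xi)|\le 9e^{-\freq}(\freq^2/\xi^2+1/|\xi|)$---the non-summable $1/|\xi|$ piece is harmless because there are only $\bigO(\freq^3)$ terms, contributing $\bigO(e^{-\freq}\log\freq/\freq)$; and (iii) a \emph{far tail} $|\xi_m|>\freq^4$ where the sinc decomposition \emph{does} work, with remainder $|\hat D|\le C\freq|\xi|^{-5/4}$ (Lemma~\ref{l:EShat3}) now absolutely summable, and the sinc part handled by the phased-sum lemma. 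Your single global bound on $R$ collapses ranges (ii) and (iii) into one, and that is precisely where the argument breaks.
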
  %
The rest of this paper breaks its proof into three stages:
Section~\ref{s:err} reviews the standard bound for
$\eps_\infty$ involving a phased sum over the
Fourier transform $\hat\psi$ of a general scaled kernel.
Section~\ref{s:asymp} uses contour deformation and steepest descent
to derive $\freq\to\infty$ asymptotics
for %
$\hat\phi_{\tbox{ES},\freq}$,
both below and above cutoff,
then proves two technical lemmas on the decay of $\hat\phi_{\tbox{ES},\freq}$.
Section~\ref{s:pf} brings in two lemmas to handle
phased sinc sums, then combines all of these ingredients to complete the proof.

We conclude the paper in Section~\ref{s:optim} by
drawing new connections---apparent in Fig.~\ref{f:kernel}(b,e)---between the
ES, KB, and PSWF kernels, and discussing their shared optimal rate.

\begin{rmk}[Comparison to \KB\ bounds]  %
  \label{r:fourmont}
  In the limit $\gamma\to1^{-}$, \eqref{ESerr}
  approaches the exponential convergence rate
  of the rigorous estimate
  for the KB kernel
  \cite{fourmont}
  \cite[App.~C]{usingnfft}
  when its parameter $\freq$ is set by \eqref{gam} with $\gamma=1$,
  \be
  \eps_\infty \;\le\; 4\pi   (1-1/\rat)^{1/4}
  \left(\sqrt{\frac{w-1}{2}}+\frac{w-1}{2}\right)
  e^{-\pi(w-1)\sqrt{1-1/\rat}}
  ~.
  \label{KBerr}
  \ee
  Unlike our result, this is a non-asymptotic bound with explicit constant,
  although we note that our algebraic prefactor is
  improved by a factor $\sqrt{w}$.
\end{rmk}   %

\begin{rmk}[Choice of safety factor $\gamma$]  %
  Taking $\gamma\to 1^-$ maximizes the exponential
  rate in \eqref{ESerr},
  but, in practice, choosing it slightly below 1 gives the smallest error:
  in \cite{finufft} we recommend $\gamma \approx 0.98$, similar to
  previous workers \cite[Table~II]{jackson91} \cite[Fig.~11]{fessler}.
  The restriction $\gamma<1$ in Thm.~\ref{t:ESerr}
  is due to
  breakdown of the stationary phase estimate \eqref{EShat2}
  at the cutoff frequency $\xi=\freq$ where saddles head to $\pm\infty$.
  One may be able to extend the proof to $\gamma=1$ by using
  another method to bound $\hat\phi$ near cutoff.
  However, because of the rapid growth in $\hat\phi$ below cutoff
  (see, e.g., Fig.~\ref{f:kernel}(e)),
  for any $\gamma>1$ the rate would necessarily be severely reduced.
  \label{r:safety}
\end{rmk}        %

\begin{rmk}[Related work] %
  After we submitted this work,
  the 2nd version of a recent preprint by Potts--Tasche
  appeared, including a remarkable theorem \cite[Thm.~4.5]{potts19}
  showing the same exponential convergence
  rate in $\eps_\infty$ as we do,
  but for the related kernel
\be
\phi_{\tbox{\rm sinh},\freq}(z) :=
\left\{\begin{array}{ll}
\sinh \freq \sqrt{1-z^2}, & |z|\le 1~,\\
0, & \mbox{otherwise}~,
\end{array}
\right.
\label{sinh}
\ee
and only for the parameter scaling $\freq=2w$.
Their restriction
$\rat>1/[2(1-2/\pi)] \approx 1.376$ corresponds to $\gamma<1$
for their choice $\freq=2w$.
This excludes some low-upsampling options
such as $\rat = 5/4$ that are useful in practice \cite{finufft}.
Yet, by exploiting the analytic
Fourier transform \cite[(7.58)]{oberhettinger} of \eqref{sinh},
they obtain an {\em explicit, non-asymptotic} bound with
tighter algebraic prefactor than ours.
Thus it is now possible that better bounds than Theorem~\ref{t:ESerr}
could be derived by splitting \eqref{ES} into a multiple of \eqref{sinh}
plus a small correction.
\end{rmk} %

\section{Aliasing error for type 1 and type 2 transforms}%
\label{s:err}

Here we recall a known rigorous estimate on the error
of the 1D type 1 and 2 algorithms given in the introduction,
performed in exact arithmetic.
We start with the Poisson summation formula
with an extra phase $e^{i\theta}$:
for any $\psi \in L^1(\RR)$ of bounded variation
with Fourier transform $\hat\psi$,
and any lattice spacing $h>0$,
\be
\sum_{l\in\ZZ} e^{il\theta} \psi(x - lh) \; = \;
\frac{1}{h} \sum_{m\in\ZZ}
\hat\psi\biggl(-\frac{2\pi m + \theta}{h}\biggr)
\exp \biggl({i\,\frac{2\pi m + \theta}{h}x}\biggr)
~.
\label{pois}
\ee
The standard proof is that multiplication
by $e^{-ix\theta/h}$ makes the left-hand side a periodic function of $x$,
hence its Fourier series coefficients are given by the
Euler--Fourier formula.
At any points $x$ where the left-hand side is discontinuous,
one must replace $\psi(x-lh)$ by $[\psi(x^+-lh)+\psi(x^--lh)]/2$
\cite[\S 11.22]{apostol}.

We now derive the aliasing error for a general scaled kernel $\psi(x)$.
For the type 1 NUFFT, setting $h=2\pi/n$, inserting \eqref{bl} into \eqref{pb},
and subtracting from the true answer
\eqref{1} gives the error
\be
\tilde f_k - f_k  \;=\;
\sum_{j=1}^M c_j \left[ p_k \sum_{l=0}^{n-1} e^{i h l k}
  \ppsi(lh-x_j) - e^{ikx_j} \right ]
\;=:\; \sum_{j=1}^M c_j g_k(x_j)
~, \quad -N/2\le k < N/2~.
\label{t1err}
\ee
Now writing the general ordinate as $x$,
and applying Poisson summation \eqref{pois} with $\theta=hk$,
\bea
g_k(x) &:=&  p_k \sum_{l=0}^{n-1} e^{i l h k}
\ppsi(lh-x) - e^{ikx}
\;=\; p_k \sum_{l\in\ZZ} e^{ilhk} \psi(lh-x) - e^{ikx}
\nonumber \\
&&=\;
\frac{p_k}{h} \sum_{m\in\ZZ} \hat\psi(k+mn) e^{i(k+mn)x} - e^{ikx}
~.
\nonumber
\eea
The choice \eqref{pk} for $p_k$ thus
exactly kills the $m=0$ term, giving the well known
aliasing error formula
\cite{steidl98} \cite[(4.1)]{fourmont} \cite[Sec.~V.B]{fessler},
\be
g_k(x) = \frac{1}{\hat\psi(k)}\sum_{m\neq 0} \hat\psi(k+mn) e^{i(k+mn)x}
~.
\label{gkx}
\ee
Thus, since $|k|\le N/2$,
error is controlled by a phased sum over the tails of $\hat\psi$
at frequencies of magnitude at least $n-N/2$.
Since type 2 is the adjoint of type 1 (or by similar manipulations to the
above), its error is
\be
\tilde c_j - c_j \; = \! \sum_{-N/2 \le k < N/2} f_k \overline{g_k(x_j)}
~, \qquad j=1,\dots,M~.
\label{t2err}
\ee
To summarize,
let $E$ be the ``error matrix''
with elements $E_{kj} = g_k(x_j)$ given explicitly by \eqref{gkx}
and $E^\ast$ be its Hermitian adjoint,
then the output aliasing error vectors are
\be
\tilde{\mbf{f}} - \mbf{f} = E\mbf{c}
\quad \mbox{ (type 1), } \qquad\qquad
\tilde{\mbf{c}} - \mbf{c} = E^\ast\mbf{f}
\quad \mbox{ (type 2). }
\label{E}
\ee
From this the bounds \eqref{1nrm} follow immediately if we
define $\eps_\infty$ by a simple uniform bound on all matrix elements,
\be
|E_{kj}|
\;\le\;
\max_{|k|\le N/2}
\|g_k\|_\infty \;\le\;
\frac{
  \max_{|k|\le N/2, \,x\in\RR} \left|
    \sum_{m\neq 0} \hat\psi(k+mn) e^{i(k+mn)x}
  \right|
}{\min_{|k|\le N/2} |\hat\psi(k)|}
\; =: \; \eps_\infty
~.
\label{epsest}
\ee
Since the dynamic range over the output band, $\hat\psi(0)/\hat\psi(N/2)$,
is not large \cite[Remark~3]{finufft},
any lack of tightness in the second inequality in \eqref{epsest} is small.

\begin{rmk}  %
  Users of NUFFT software
  often care about relative $\ell^2$ errors, rather than absolute
  $\ell^1$-$\ell^\infty$ bounds such as \eqref{1nrm}.
  Making such bounds rigorous necessitates large prefactors
  in front of $\eps_\infty$; yet, in practice,
  relative $\ell^2$ errors match $\eps_\infty$ quite well, for reasons
  discussed in \cite[Sec.~4.2]{finufft} \cite[Sec.~4]{boettcher}.
  \label{r:emperr}
\end{rmk}  %

\section{Asymptotics of the Fourier transform of the ES kernel}
\label{s:asymp}

Here we derive asymptotics in the width parameter $\freq\to\infty$
of the \FT\ of \eqref{ES}.
From now on we abbreviate the kernel by $\phi(z)$, thus its
Fourier transform by $\hat\phi(\xi)$.
We introduce the scaled frequency
\be
\rho \;:=\; \xi/\freq ~.
\label{rho}
\ee
The cutoff $|\xi|=\freq$ (vertical line
in Fig.~\ref{f:kernel}(c,f)) is therefore at $|\rho|=1$.
The following shows that, up to weak algebraic prefactors:
(a) below cutoff $\hat\phi$ has a similar form to $\phi$ itself,
and that (b) above cutoff $\hat\phi$ is oscillatory
but uniformly exponentially small,
with the same exponential rate $e^{-\freq}$ as occurs for the KB and PSWF
kernels (see Sec.~\ref{s:optim}).

\bfi[th] %
  \pig{1.95in}{saddlea_lab}
  \pig{1.95in}{saddleb_lab}
  \;
  \pig{1.95in}{saddleft}
\ca{
  Real part of the integrand $e^{\freq p(z)}$
  appearing in the ES kernel \FT\
  (see \eqref{ESFT}),
  plotted in the complex $z$ plane, for $\freq=30$.
  (a) $\rho=0.8$ (below cutoff).
  Also shown are the saddle point $z_0$ (star), example contour (curve with arrow),
  boundaries where $\re p(z) = \re p(z_0)$ (dashed lines),
  and standard branch cuts (wiggly lines).
  The inset shows the elliptic coordinate plane $(u,v)$
  for the right half of the
  $z$-plane, and the ``bad'' region where $\re p(z) > \re p(z_0)$ (shaded).
  (b) $\rho =1.2$ (above cutoff);
  note change in color scale. The branch cuts of the square-root have
  been rotated to point downwards, exposing the two saddle points.
  (c) Comparison of the asymptotics \eqref{EShat1} and \eqref{EShat2}
  to the true $\hat\phi$ (evaluated accurately by quadrature)
  for $\freq=30$; ``diff'' shows
  their absolute difference.  
  The weak algebraic divergence of \eqref{EShat1}--\eqref{EShat2} as $\rho\to1$ is highlighted by
  an %
  asymptote.
}{f:saddle}
\efi

\begin{thm} %
  Let $\hat\phi$ be the Fourier transform of the ES kernel \eqref{ES}.
  
  (a)
  Fix $\rho\in(-1,1)$, i.e.\ below cutoff. Then, %
\be
\hat\phi(\rho\freq) \; =\;
\sqrt{\frac{2\pi}{\beta}} \frac{1}{(1-\rho^2)^{3/4}} e^{\freq(\sqrt{1-\rho^2}-1)}
\left[ 1 + \bigO(\freq^{-1}) \right]
~, \qquad \freq\to\infty
~.
\label{EShat1}
\ee

(b)
  Fix $\rho$, $|\rho| > 1$, i.e.\ above cutoff. Then,
\be
\hat\phi(\rho\freq) \; =\;
2\sqrt{\frac{2\pi}{\beta}}
e^{-\freq}
\frac{\sin\left(\freq\sqrt{\rho^2-1} - \pi/4\right)}
{(\rho^2-1)^{3/4}}
\left[ 1 + \bigO(\freq^{-1}) \right]
~, \qquad \freq\to\infty
~.
\label{EShat2}
\ee
\label{t:EShat}
\end{thm}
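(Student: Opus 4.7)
The plan is to apply the classical method of steepest descent to the integral
\[
\hat\phi(\rho\freq) \;=\; e^{-\freq} \int_{-1}^1 e^{\freq p(z)}\,dz~, \qquad p(z) := \sqrt{1-z^2} + i\rho z~,
\]
obtained by factoring $e^{-\freq}$ out of the ES kernel \eqref{ES} and substituting $\xi=\rho\freq$. The critical-point equation $p'(z) = -z/\sqrt{1-z^2} + i\rho = 0$ gives $z^2 = \rho^2/(\rho^2-1)$, so there are two saddles whose nature differs dramatically across the cutoff $|\rho|=1$: below cutoff they lie on the imaginary axis, while above cutoff they lie on the real axis outside $[-1,1]$, on what would be the standard branch cuts of $\sqrt{1-z^2}$. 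A short computation gives $p''(z) = -(1-z^2)^{-3/2}$, which supplies the Gaussian prefactor in either case.

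For case (a) with $|\rho|<1$, I would single out $z_0 = i\rho/\sqrt{1-\rho^2}$ (the saddle in the upper half plane; by $\hat\phi(-\xi)=\hat\phi(\xi)$ we may assume $\rho\ge 0$), evaluate $p(z_0)=\sqrt{1-\rho^2}$ and $p''(z_0) = -(1-\rho^2)^{3/2}$, and deform the real contour $[-1,1]$ upward through $z_0$ along the steepest descent path (dashed in Fig.~\ref{f:saddle}(a)). Since $\re p(\pm 1)=0<\sqrt{1-\rho^2}=\re p(z_0)$, the two ``legs'' of the deformed contour joining $z_0$ to $\pm 1$ contribute at most $O(e^{-\freq})$, which is exponentially subdominant to the leading saddle contribution of order $e^{\freq\sqrt{1-\rho^2}-\freq}/\sqrt{\freq}$. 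The standard Gaussian saddle-point expansion then gives \eqref{EShat1}, with the $O(\freq^{-1})$ remainder coming from the cubic term in the Taylor expansion of $p$ at $z_0$.

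For case (b) with $|\rho|>1$, the saddles $z_\pm = \pm\rho/\sqrt{\rho^2-1}$ lie on the real axis outside $[-1,1]$, so I would first analytically continue $\sqrt{1-z^2}$ by rotating its two standard branch cuts downward from $\pm 1$ into the lower half plane (Fig.~\ref{f:saddle}(b)). The integrand is then holomorphic in a strip containing both saddles, and $[-1,1]$ may be deformed through the saddle pair along a union of two steepest descent arcs. Computing $p(z_\pm) = \pm i\sqrt{\rho^2-1}$ and $p''(z_\pm) = \mp i(\rho^2-1)^{3/2}$ (signs fixed by the rotated branch), the two Gaussian contributions carry complex-conjugate phases and sum to the oscillatory factor $\sin(\freq\sqrt{\rho^2-1}-\pi/4)$, up to an overall sign determined by contour orientation. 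Multiplying by the $e^{-\freq}$ prefactor yields \eqref{EShat2}. The endpoint contributions from the jump of $\phi$ at $\pm 1$ are $O(e^{-\freq}/\freq)$ by one integration by parts, one power of $\freq$ smaller than the saddle contribution $O(e^{-\freq}/\sqrt{\freq})$, so they are absorbed into the $O(\freq^{-1})$ remainder.

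The main obstacle is case (b): verifying that the contour deformation is legal after the branch rotation (checking that no singularity is crossed and that the arcs at large imaginary part decay), and fixing the correct steepest-descent direction and branch sign at each saddle so that the two contributions interfere coherently with the exact phase $-\pi/4$ (rather than $+\pi/4$, which would produce a $\cos$). A secondary technicality is that both asymptotics degrade as $|\rho|\to 1$ since the saddles coalesce and $|p''|\to 0$; this is the source of the algebraic divergence of the prefactor visible in Fig.~\ref{f:saddle}(c) and of the theorem's restriction to $\rho$ fixed away from the cutoff.
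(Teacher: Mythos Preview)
Your overall strategy---steepest descent applied to $\int_{-1}^1 e^{\beta p(z)}\,dz$ with $p(z)=\sqrt{1-z^2}+i\rho z$---is exactly the paper's. The saddle locations, values of $p$ and $p''$, and the branch-cut rotation in case~(b) all agree. Two points, however, deserve attention.

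\textbf{Case (a): contour existence is asserted, not shown.} Knowing only that $\re p(\pm1)=0<\re p(z_0)$ does not by itself guarantee a contour from $-1$ to $1$ through $z_0$ along which $\re p$ is globally maximized at $z_0$; a ``barrier'' region of large $\re p$ could in principle obstruct it. The paper closes this by passing to elliptic coordinates $z=\cosh(u+iv)$, in which $\re p(u,v)=(\cosh u-\rho\sinh u)\sin v$ is separable, and then checking monotonicity of the level curve $\re p=\re p(z_0)$ to exclude any barrier. This is a routine but necessary verification that your sketch omits.

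\textbf{Case (b): the endpoint estimate is too weak.} Here there is a genuine gap. The crux is that $\re p(\pm1)=0=\re p(z_\pm)$: the endpoints sit at the \emph{same} height as the saddles, so their contributions are not automatically subdominant and must be estimated separately. Your integration-by-parts bound gives an endpoint contribution of size $O(e^{-\beta}/\beta)$, while the saddle contribution is $O(e^{-\beta}/\sqrt{\beta})$. The ratio is $O(\beta^{-1/2})$, not ``one power of $\beta$'' as you state, so this only yields $[1+O(\beta^{-1/2})]$ rather than the claimed $[1+O(\beta^{-1})]$. (Iterating the integration by parts does not help, since $\phi'$ has a $(1-z^2)^{-1/2}$ singularity at $\pm1$.) The paper instead excises short pieces of the deformed contour near $\pm1$, parametrizes them as $z=\pm1+t$ heading into the lower half-plane, and applies Laplace's method for contour integrals (Olver, Thm.~6.1) with the local behavior $P(t)\sim -i\sqrt{2t}$, i.e.\ exponent power $\mu=1/2$. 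This yields an endpoint contribution of $O(\beta^{-2})$, which is $\beta^{-3/2}$ smaller than the saddle term and therefore safely absorbed into the $O(\beta^{-1})$ remainder.
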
 %

\begin{rmk}
  Fig.~\ref{f:saddle}(c) shows the high accuracy of these
  asymptotic formulae \eqref{EShat1}--\eqref{EShat2}
  even at the modest value $\freq=30$.
  Around two digits of {\em relative}
  accuracy are achieved everywhere shown
  except $\rho\approx 1$, where $\hat\phi$
  is already exponentially small.
  Remarkably, even details of the exponentially small tail {\em oscillations}
  at $\rho>1$ are matched to high relative accuracy.
  (However, Remark~\ref{r:decay} will show that 
  for $\rho \gg 1$ relative error in this tail must diverge.)
  \label{r:match}
\end{rmk}

\begin{proof}
In either case (a) or (b),  the \FT\ to be estimated is
\be
\hat\phi(\rho\freq) = e^{-\freq} \int_{-1}^1
e^{\freq (\sqrt{1-z^2} + i\rho z)} dz
= e^{-\freq} \int_{-1}^1 e^{\freq p(z)} dz~,\qquad
p(z):=\sqrt{1-z^2} +i\rho z~.
\label{ESFT}
\ee
We apply saddle point integration in the
complex $z$ plane (see \cite[Thm.~7.1, p.~127]{olver};
note that we have the opposite sign convention for $p(z)$).
This requires a smooth contour through an analytic region connecting $-1$
to $+1$, avoiding branch cuts,
passing through the saddle point(s), and along which $\re p(z)$
has its global maximum at the saddle point $z_0$ where $p'=0$.
The theorem then states that
$\int e^{\freq p(z)} dz = e^{\freq p(z_0)}\sqrt{\frac{2\pi}{-p''(z_0)\freq}}[1 + \bigO(\freq^{-1})]$.
Note that the standard branch cut $(-\infty,0)$ for the square-root gives
cuts for $p(z)$ at $(-\infty,-1)$ and $(1,+\infty)$;
these cuts are shown in Fig.~\ref{f:saddle}(a).

{\bf Case (a).}
We take $0\le \rho<1$, since $\hat\phi$ has even symmetry.
Since $\re i\rho z$ becomes more negative as $\im z$ grows,
the saddle $z_0 = i\rho/\sqrt{1-\rho^2}$ is on the
positive imaginary axis; see Fig.~\ref{f:saddle}(a).
To show the existence of a valid contour we switch to standard elliptical
coordinates
\be
z = \cosh(u+iv)~,\qquad \re z = \cosh u \cos v~,\qquad \im z = \sinh u \sin v
~,
\label{ellip}
\ee
where $\mu\ge 0$ and $0\le v < 2\pi$ covers the plane.
Since $1-z^2 = -\sinh^2(u+iv)$, we get
\be
\re p(z) = \re p(u,v) = (\cosh u - \rho \sinh u) \cdot \sin v
~,
\label{repz}
\ee
which shows, remarkably, that the magnitude of the exponential
in \eqref{ESFT} is %
separable in this coordinate system.
By solving $p'=0$ one finds that the saddle is at
$(u,v) = (\tanh^{-1} \rho, \pi/2)$,
where $\re p(z_0) = \sqrt{1-\rho^2}$.

Starting with the right half of the contour,
we need to show that there is some smooth open path in $(u,v)$ between
$(\tanh^{-1} \rho, \pi/2)$ and $(0,0)$ along which $\re p$ is
everywhere less than $\re p(z_0)$.
At the endpoint, $p = 0$, which is indeed less than $\re p(z_0)$;
yet it is possible that a barrier region of large $\re p$
prevents such a path from existing.
We now show that no such barrier exists.
The level curve $\re p(u,v) =\re p(z_0)$ has positive slope
in the $(u,v)$ plane (see Fig.~\ref{f:saddle}(a) inset).
This is clear since the level curve intersects each line $u=$ constant
only once in $(0,\pi/2)$, because $\sin v$ is monotonic there.
Furthermore, $\cosh u - \rho \sinh u$ is monotonically
decreasing in $[0,\tanh^{-1} \rho)$, as is apparent from its derivative,
so that the $v$-value of the intersection grows monotonically
with $u$.
This means that the ``bad'' region where $\re p(u,v) \ge \re p(z_0)$ is
confined to the upper left corner of the $(u,v)$ rectangle
(see inset), so there is no obstruction to crossing the diagonal
while remaining small.
The left half of the contour
may then be chosen as a reflection of the right about the imaginary $z$ axis.
Thus a valid saddle contour exists.

At the saddle point, $p''(z_0) = -(1-\rho^2)^{3/2}$,
so that, taking the first term in the saddle point theorem
\cite[Thm.~7.1, p.~127]{olver}
gives \eqref{EShat1}.

{\bf Case (b).}
Now $\rho>1$.
Solving $p'=0$ gives two saddle points on the real axis,
$z_0^{(\pm)} = \pm \rho/\sqrt{\rho^2-1}$, where $p(z_0^{(\pm)}) = 0$.
Since these lie on the standard branch cuts of the square-root,
to make use of saddle point integration connecting
$z=-1$ to $1$ in the upper half plane,
one must {\em rotate the branch cuts downwards} to expose more of the Riemann
sheet on which the contour lives.
With this done,
in order to avoid regions of large integrand, the contour must
first head into the lower half-plane, pass up through $z_0^{(-)}$,
into the upper half plane, down through $z_0^{(+)}$,
and finish again from the lower half-plane; see Fig.~\ref{f:saddle}(b).
Both saddles contribute equally.

To show the existence of a valid contour we examine \eqref{repz}.
The elliptical coordinates of the saddle points are $(\coth^{-1}\rho,0)$
and $(\coth^{-1}\rho,\pi)$.
The factor $\cosh u - \rho \sinh u$ vanishes on the ellipse
$u=\coth^{-1}\rho$ passing though the saddles, and,
since it is monotonically decreasing,
is positive for all smaller $u$ and negative for all larger $u$.
Thus $\re p<0$ everywhere in the upper half plane outside the ellipse
($u>\coth^{-1}\rho$, $0<v<\pi$), and in the lower half plane
inside the ellipse with the slit $[-1,1]$ omitted
($0<u<\coth^{-1}\rho$, $\pi<v<2\pi$).
Thus a smooth contour exists passing through these regions via the two saddles.

To apply the theorem one must
sum the left and right halves of the contour.
However, on each half the theorem still cannot be used directly, since
the start and end values $\re p(\pm1)=0$ are just as large
as the value at the saddles.
Thus we remove fixed pieces of the contour around $\pm1$,
allowing the theorem to be applied.
Using $p(z_0^{(\pm)}) = \pm i \sqrt{\rho^2-1}$ at the two saddles,
$|p''(z_0^{(\pm)})| = (\rho^2-1)^{3/2}$,
the steepest descent directions $e^{\mp i 3\pi/4}$,
and summing the two contributions, gives \eqref{EShat2}.

Finally, we show that the contributions due to these fixed excluded
pieces of the contour touching $\pm1$ are of lower order.
Consider an excluded piece of contour in the lower half-plane
from $1$ to $1+b$, where $b\in\mathbb{C}$, $\im b<0$,
and $1+b$ is strictly inside the ellipse described above.
We have already explained that $p<0$ on this contour, apart from at $p(1)=0$.
Writing $z=1+t$, and using $p(1+t) = i\rho + \sqrt{-2t-t^2} + i\rho t$,
The contour integral is
$$ \int_0^b e^{\freq p(1+t)} dt = e^{i\rho}\int_0^b e^{\freq P(t)} dt
~,\qquad \mbox{ where} \; P(t)= -1\sqrt{2t+t^2}+i\rho t \sim -i\sqrt{2t} \; \mbox{ for } t\to 0~.
$$
All the conditions for
Laplace's method for contour integrals \cite[Thm.~6.1, p.~125]{olver}
are met, with power $\mu=1/2$,
so its contribution is $\sim -e^{i\rho}/\freq^2 = \bigO(\freq^{-2})$,
which is $\freq^{3/2}$ times smaller than the contribution from the saddles.
The same argument applies near $-1$.
Thus these end contributions are of lower order and can be ignored in
\eqref{EShat2}.
\end{proof}

\begin{rmk}
  Informally Theorem~\ref{t:EShat} states:
  ``the \FT\ of the exponential
  of a semicircle is asymptotically the exponential of a semicircle, plus
  exponentially small tails.''
  This may be less of a surprise when it is
  recalled that the ES kernel is close to the PSWF (see Sec.~\ref{s:optim}), and
  that this ``self-Fourier-transform'' property holds exactly for the PSWF
  after truncation
  \cite{SlepianI,osipov}.
\end{rmk}

\begin{rmk}
  It is tempting to fix $\freq$ and interpret \eqref{EShat2} as a decay
  $\bigO(|\rho|^{-3/2})$, i.e.\ $\bigO(|\xi|^{-3/2})$,
  which would be directly summable when inserted into \eqref{epsest}.
However, this is false.
The kernel has discontinuities at $z=\pm 1$ of strength $e^{-\freq}$,
and is otherwise smooth, so the asymptotic must in fact be (at fixed $\freq$),
\be
\hat\phi(\xi) \;\sim\; 2 e^{-\freq} \frac{\sin \xi}{\xi} \;=\; \bigO(|\xi|^{-1})
~, \qquad |\xi|\to\infty~,
\label{sinc}
\ee
which is {\em not} absolutely convergent.
This is an order-of-limits problem:
\eqref{EShat2} cannot be applied at fixed $\freq$
in the limit $|\rho|\to\infty$, since the implied constant in the error
term is unknown and must in fact be unbounded as $|\rho|\to\infty$.
These growing saddle-point
error terms are associated with the saddles $z_0^{(\pm)}$ approaching
the square-root singularity endpoints $\pm1$.
Empirically we find that the smooth
transition from \eqref{EShat2} to \eqref{sinc} occurs around
$\xi\approx\freq^2$.
\label{r:decay}
\end{rmk}

Because of the above remark, in order to get a rigorous error estimate
we will also need the following, which
bounds the $\freq$-dependence of the {\em deviation} (denoted by $\hat D$)
from the sinc function
\eqref{sinc}, uniformly in $\freq$ and for sufficiently high frequencies $\xi$.
To avoid ambiguities involving asymptotics with two parameters
(here $\freq$ and $\xi$), for the rest of the section
we avoid ``big-$O$'' notation.

\begin{lem} %
  There exists a constant $C>0$, independent
  of the shape parameter $\freq$ and frequency $\xi$, such that for all
  $\freq \ge 2$ and $|\xi|\ge\freq^4$,
  the \FT\ of the ES kernel \eqref{ES} is
  \be
  \hat\phi(\xi) \;=\; e^{-\beta}\left[ 2\frac{\sin \xi}{\xi}
    + \hat D(\freq,\xi) \right]
  ~,\qquad \mbox{ where } \quad  |\hat D(\freq,\xi)| \; \le \;
  C \frac{\freq}{|\xi|^{5/4}}
  ~.
  \label{EShat3}
  \ee
  \label{l:EShat3}
\end{lem}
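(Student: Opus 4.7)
The plan is to isolate the endpoint-jump contribution and bound the smooth remainder by a geometrically convergent series of Bessel integrals. Multiplying by $e^{\beta}$,
\begin{equation*}
e^\beta\hat\phi(\xi) \;=\; \int_{-1}^1 e^{\beta\sqrt{1-z^2}}e^{i\xi z}\,dz \;=\; \int_{-1}^1 e^{i\xi z}\,dz \;+\; \hat G(\xi),
\end{equation*}
where $\hat G(\xi):=\int_{-1}^1 g(z)e^{i\xi z}\,dz$ and $g(z):=e^{\beta\sqrt{1-z^2}}-1$ satisfies $g(\pm 1)=0$. The first integral evaluates exactly to $2\sin\xi/\xi$, identifying $\hat D(\beta,\xi)=\hat G(\xi)$, so it suffices to show $|\hat G(\xi)|\le C\beta/|\xi|^{5/4}$.

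I would then Taylor-expand $g(z)=\sum_{k\ge 1}(\beta^k/k!)(1-z^2)^{k/2}$ (uniformly convergent on $[-1,1]$) and interchange sum and integral, justified by $\sum_k(\beta^k/k!)\int_{-1}^1(1-z^2)^{k/2}\,dz\le 2e^\beta<\infty$. The classical Bessel integral (see e.g.\ \cite[(10.9.4)]{dlmf})
\begin{equation*}
\int_{-1}^1(1-z^2)^{\nu-1/2}e^{i\xi z}\,dz \;=\; \sqrt{\pi}\,\Gamma(\nu+\tfrac12)(2/|\xi|)^\nu J_\nu(|\xi|),\qquad \nu>-\tfrac12,
\end{equation*}
applied with $\nu=(k+1)/2$ then yields
\begin{equation*}
\hat G(\xi) \;=\; \sum_{k=1}^\infty \frac{\sqrt{\pi}\,\beta^k\,\Gamma((k+2)/2)}{k!}\left(\frac{2}{|\xi|}\right)^{(k+1)/2} J_{(k+1)/2}(|\xi|).
\end{equation*}

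Bounding every Bessel factor by Landau's uniform inequality $|J_\nu(x)|\le c\,x^{-1/3}$ (valid for $\nu\ge 0$, $x>0$, with an absolute constant $c<1$) gives $|\hat G(\xi)|\le\sum_{k\ge 1}a_k$ with
\begin{equation*}
a_k \;:=\; c\sqrt{\pi}\,\frac{\beta^k\,\Gamma((k+2)/2)\,2^{(k+1)/2}}{k!\,|\xi|^{(k+1)/2+1/3}}.
\end{equation*}
Using Wendel's inequality $\Gamma((k+3)/2)/\Gamma((k+2)/2)\le\sqrt{(k+2)/2}$ (valid for $k\ge 0$), the hypothesis $|\xi|\ge\beta^4$, monotonicity of $k\mapsto\sqrt{k+2}/(k+1)$ for $k\ge 1$ (maximized at $k=1$ with value $\sqrt3/2$), and $\beta\ge 2$, one finds in turn
\begin{equation*}
\frac{a_{k+1}}{a_k} \;\le\; \frac{\beta\sqrt{k+2}}{(k+1)|\xi|^{1/2}} \;\le\; \frac{\sqrt{k+2}}{(k+1)\beta} \;\le\; \frac{\sqrt3}{2\beta} \;\le\; \frac{1}{2}.
\end{equation*}
Summing the geometric majorant gives $|\hat G(\xi)|\le 2a_1 = 2c\pi\,\beta/|\xi|^{4/3}$; since $|\xi|\ge\beta^4\ge 16$ forces $|\xi|^{-4/3}\le|\xi|^{-5/4}$, the bound follows with $C=2c\pi$.

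The main obstacle I foresee is requiring a Bessel bound that is uniform in the order $\nu=(k+1)/2\in[1,\infty)$ as $k$ varies; Landau's $x^{-1/3}$ estimate handles this cleanly, sparing any case split between small-$\nu$ and $\nu$-comparable-to-$x$ regimes. Its suboptimal exponent (compared to the $x^{-1/2}$ decay available at fixed $\nu$ with $x\gg\nu^2$) is more than absorbed by the comfortable gap between the resulting $|\xi|^{-4/3}$ and the required $|\xi|^{-5/4}$.
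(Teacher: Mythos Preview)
Your argument is correct and takes a genuinely different route from the paper. The paper proves the bound by contour deformation: it writes $\hat D(\beta,\xi)=\int_{-1}^1(e^{\beta\sqrt{1-z^2}}-1)e^{i\xi z}\,dz$, pushes the contour to the three-segment path $-1\to-1+i\to1+i\to1$, discards the exponentially small top segment, and then splits each vertical leg at height $t=\xi^{-1/2}$ to get a Laplace-type estimate of size $C\beta/|\xi|^{5/4}$. Your approach instead Taylor-expands $e^{\beta\sqrt{1-z^2}}-1$ term by term, recognizes each integral as a Poisson--Bessel representation, and controls the resulting Bessel series with Landau's uniform bound $|J_\nu(x)|\le c\,x^{-1/3}$ and a geometric ratio test. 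This is shorter and even yields the slightly sharper exponent $|\xi|^{-4/3}$ before you relax it; the price is that you invoke Landau's inequality as a black box, whereas the paper's contour argument is self-contained with only elementary calculus estimates. Both methods exploit the same underlying structure (the subtraction of the top-hat leaves a function vanishing at the endpoints, buying extra Fourier decay), but your series expansion makes the gain quantitatively transparent: the $k=1$ term already carries the full $\beta/|\xi|^{4/3}$ scaling, and higher $k$ contribute geometrically less once $|\xi|\ge\beta^4$.
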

\begin{proof}
  Since $\hat\phi$ is symmetric, take $\xi>0$.
  Since the top-hat function with value $e^{-\freq}$ in $[-1,1]$, and zero
  elsewhere, has the \FT\ $2e^{-\beta} \sin \xi / \xi$,
  subtracting this top-hat from the kernel \eqref{ES} leaves the \FT\
  deviation
  $$
  \hat D(\freq,\xi) = \int_{-1}^1 (e^{\freq \sqrt{1-z^2}} - 1) e^{i\xi z} dz~.
  $$
  We deform this contour integral then apply
  Laplace-type estimates.
  Our deformed contour comprises three straight segments connecting the start
  point $-1$ to $-1+i$, from there to $1+i$, and from there to the endpoint $1$.
  We write their contributions as $\hat D(\freq,\xi) = I_1+I_2+I_3$.
  Since at all points $z$ on the middle segment we have
  $|e^{i\xi z}|\le e^{-\xi}$, and 
  $|e^{\freq \sqrt{1-z^2} +i\xi z}| \le e^{\sqrt{3} \freq - \xi}$, 
  but $\xi\ge \freq^4$,
  then $I_2$ is exponentially small as $\xi\to\infty$,
  and can be dropped.
  
  The first segment we parametrize by $z=-1+it$, then split the integral
  to give
  \bea
  I_1 &\;=\;& i e^{-i\xi} \int_{0}^1 (e^{\freq \sqrt{2it + t^2}} - 1)e^{-\xi t} dt
  \nonumber \\
  &&\;= i e^{-i\xi} \left[ \int_{0}^{\xi^{-1/2}} (e^{\freq \sqrt{t} \sqrt{2i + t}} - 1)e^{-\xi t} dt
    + \int_{\xi^{-1/2}}^1 (e^{\freq \sqrt{t} \sqrt{2i + t}} - 1)e^{-\xi t}  dt \right]
  ~.
  \nonumber
\eea
  Since
  \be
  |\sqrt{2i+t}|\le 5^{1/4} \quad  \mbox{ for } \; 0\le t\le1 ~,
  \label{5q}
  \ee
  the lower integral is bounded in magnitude by
  \be
  \left(\max_{0\le t \le \xi^{-1/2}} e^{5^{1/4}\freq \sqrt{t}} - 1 \right) \cdot
  \int_{0}^{\xi^{-1/2}} e^{-\xi t} dt~.
  \label{I1a}
  \ee
  Here since $\sqrt{t}\le \xi^{-1/4} \le 1/\freq$ we see that the first exponent
  is uniformly bounded by a constant, and using $e^x-1 \le e^c x$ for $x\le c$,
  we have that the first term in \eqref{I1a} is bounded by $C\freq/\xi^{1/4}$.
  The integral in \eqref{I1a} is bounded by $1/\xi$.
  Thus the lower integral is bounded by $C\freq/\xi^{5/4}$.

  Turning to the upper integral in $I_1$, we bound its two additive terms
  separately, 
  $$
  \int_{\xi^{-1/2}}^1 e^{-\xi t}  dt \le \frac{e^{-\sqrt{\xi}}}{\xi}~,
  \quad\mbox{and} \quad
  \int_{\xi^{-1/2}}^1 e^{\freq \sqrt{t}\, |\sqrt{2i + t}| - \xi t} dt
  \le
  \int_{\xi^{-1/2}}^1 e^{-\xi t/2}  dt \le \frac{2e^{-\sqrt{\xi}/2}}{\xi}~,
  $$
  which are both exponentially smaller than the lower integral,
  so can be dropped, giving $|I_1| \le C\freq/\xi^{5/4}$.
  Here the second integrand was bounded for all $t \ge \xi^{-1/2}$ using
  \eqref{5q} and that $2(5^{1/4}) \freq \le \xi^{3/4}$,
  which, since $\xi\ge\freq^4$, holds as long as
  $\freq^2 \ge 2(5^{1/4})$, which is satisfied if $\freq\ge 2$.
  
  The integrand on the third segment is the complex conjugate of the first,
  so $I_3 = I_1^\ast$ and $|I_3| = |I_1|$. This proves \eqref{EShat3}.
\end{proof}

The above lemma excludes $|\xi|<\beta^4$,
thus omts a growing (in $\freq$) number of terms in the sum
\eqref{epsest}, so, for the reason given in Remark~\ref{r:decay},
one cannot use the saddle asymptotic \eqref{EShat2} to cover these terms.
This motivates the following intermediate
estimate which allows smaller frequencies,
has explicit $\xi$ and $\freq$ dependence,
but (because of its second term) fails to be summable in $\xi$.

\begin{lem}
  For all $\freq>0$ and all $|\xi|\ge 3\freq$,
  the \FT\ of the ES kernel \eqref{ES} obeys
  \be
  |\hat\phi(\xi)| \;\le\; 9e^{-\beta}\left(
  \frac{\freq^2}{\xi^2}
  + \frac{1}{|\xi|} \right)
  ~.
  \label{EShat4}
  \ee
  \label{l:EShat4}
\end{lem}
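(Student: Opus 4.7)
Since $\hat\phi$ is even we may assume $\xi \ge 3\freq > 0$. My plan is to split off the top-hat (sinc) contribution and bound the smoother remainder via contour deformation, in the spirit of Lemma~\ref{l:EShat3} but tuned to the weaker hypothesis $|\xi|\ge 3\freq$ (as opposed to $|\xi|\ge \freq^4$ there).

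First I would write $\hat\phi(\xi) = 2e^{-\freq}\sin\xi/\xi + \widehat{\tilde\phi}(\xi)$, where $\tilde\phi(z) := e^{-\freq}(e^{\freq\sqrt{1-z^2}}-1)\chi_{[-1,1]}(z)$ vanishes continuously at $z=\pm 1$. The sinc piece is bounded by $2e^{-\freq}/|\xi|$, accounting for the $1/|\xi|$ term in the claim with slack absorbed into the prefactor $9$.

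For $\widehat{\tilde\phi}(\xi)$ I would deform the integration path $[-1,1]$ into the upper half-plane along the three segments $[-1,-1+iT]$, $[-1+iT,1+iT]$, $[1+iT,1]$, with $T=1$ as in Lemma~\ref{l:EShat3}. Since $\tilde\phi$ vanishes at $\pm 1$ and the integrand is analytic in the upper half-plane (principal branch of $\sqrt{1-z^2}$ with cuts on $(-\infty,-1]\cup[1,\infty)$), Cauchy's theorem applies. The top segment contributes at most $4e^{\sqrt{2}\freq-\xi}$, which after the outer $e^{-\freq}$ is of order $e^{-(4-\sqrt{2})\freq}$ for $\xi\ge 3\freq$---negligible against the claim. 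For the side segments, parametrize $z=1+it$ and use $|e^w-1|\le|w|\,e^{\re w}$ with $w=\freq\sqrt{-2it+t^2}$; since $|w|\le 5^{1/4}\freq\sqrt{t}$ and $\re w\le\freq\sqrt{2t}$ on $[0,1]$, the substitution $u=\sqrt{t}$ converts the side integral into the Gaussian integral
\[ |I_{\text{side}}| \;\le\; 2\cdot 5^{1/4}\freq\int_0^1 u^2\, e^{\sqrt{2}\freq u-\xi u^2}\,du , \]
whose exponent has maximum $\freq^2/(2\xi)$ at $u^*=\freq/(\sqrt{2}\xi)$.

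The \textbf{main obstacle} is that this Laplace factor $e^{\freq^2/(2\xi)}$ is of order $e^{\freq/6}$ at $\xi=3\freq$, spoiling the $e^{-\freq}$ scaling when bounded naively by the triangle inequality. To recover the claimed bound, I would retain the phase oscillation from $\im w = \freq\,\im\sqrt{-2it+t^2}$ (which the triangle inequality discards) and apply Laplace's method for contour integrals (\cite[Thm.~6.1, p.~125]{olver}) at the complex saddle of the full exponent $\freq\sqrt{-2it+t^2}-\xi t$. At that saddle the exponent is purely imaginary, so the saddle-point magnitude is $1$ (rather than exponentially large); combined with the standard Gaussian width $\sim(\xi/\freq)^{-1/2}$, one obtains $|I_{\text{side}}|\le C\freq^{1/2}\xi^{-3/2}$ uniformly for $\xi\ge 3\freq$. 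Multiplying by the outer $e^{-\freq}$ and using $\xi\ge 3\freq$ then yields $|\widehat{\tilde\phi}(\xi)|\le C e^{-\freq}\freq^2/\xi^2$; combined with the sinc contribution this produces the stated bound with prefactor $9$.
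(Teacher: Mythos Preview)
Your proposed fix via a saddle-point argument does not close the gap. The saddle of the full exponent $\freq\sqrt{-2it+t^2}-\xi t$ sits at $t=i\bigl(1-\xi/\sqrt{\xi^2-\freq^2}\bigr)$, which is purely imaginary and corresponds to the real point $z_0^{(+)}=\xi/\sqrt{\xi^2-\freq^2}>1$ from Theorem~\ref{t:EShat}(b); it is \emph{not} on your vertical side segment $t\in[0,1]$. To pass through it you would have to deform further and rotate the branch cut downward exactly as in that theorem, at which point you are redoing its asymptotic analysis. But Olver's result that you cite delivers only an \emph{asymptotic} statement with an implied constant depending on $\rho=\xi/\freq$, not a bound holding for \emph{all} $\freq>0$ and $|\xi|\ge3\freq$ with the explicit prefactor $9$ that the lemma requires. (There is also an algebraic slip at the end: from $\xi\ge3\freq$ one obtains $\freq^{1/2}\xi^{-3/2}\le 3^{-1/2}\xi^{-1}$, which feeds the $1/|\xi|$ term, not $\freq^2/\xi^2$; the inequality you wrote would need an \emph{upper} bound on $\xi$.)

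The paper's argument is quite different and avoids saddles and the sinc splitting altogether. It deforms the full integral $\int_{-1}^1 e^{\freq\sqrt{1-z^2}+i\xi z}\,dz$ onto two short real segments $[-R,-1]$, $[1,R]$ together with the upper semicircle $|z|=R$, where $R=\sqrt{1+[(\xi/2\freq)^2-1]^{-1}}$ is chosen so that $R-1=\bigO(\freq^2/\xi^2)$. On the real segments the integrand has unit modulus, so their total length supplies the $\freq^2/\xi^2$ term directly. On the arc the elementary inequality $\re\sqrt{1-z^2}\le(1-R^{-2})^{-1/2}\im z$ is proved, and the choice of $R$ makes this growth rate exactly $\xi/2$, half the decay rate of $e^{i\xi z}$; a Jordan's-lemma estimate then yields the $1/|\xi|$ term. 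All constants are tracked explicitly to reach the prefactor $9$, with no appeal to asymptotics.
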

\begin{proof}
  Bringing out the constant factor in the Fourier transform of \eqref{ES},
  we wish to bound
  $$
  e^\freq \hat\phi(\xi) = \int_{-1}^1 e^{\freq \sqrt{1-z^2} + i\xi z} dz~.
  $$
  Fixing $\beta$ and $\xi\ge 3\beta$,
  we (again) deform the contour into the upper half plane,
  break it into three pieces, and estimate each piece.
  We will judiciously choose a radius
  \be
  R = R_{\freq,\xi} = \sqrt{1+[(\xi/2\freq)^2-1]^{-1}}~,
  \qquad \mbox{ or }\quad
  (1-R^{-2})^{-1/2} = \xi/2\freq~.
  \label{R}
  \ee
  Since $(\xi/2\freq)^2 \ge 9/4$, then $1<R\le 3/\sqrt{5}$.
  Let $I_1$ be the integral along the real axis from $-1$ to $-R$,
  let $I_2$ be the integral along the semicircle $|z|=R$, $\im z \ge 0$,
  and let $I_3$ be the integral along the real axis from $R$ to $1$.
  Then $e^\freq \hat\phi(\xi) = I_1+I_2+I_3$.
  Here the branch cuts of the integrand may be taken to lie below the
  real axis.
  
  For all real $z$ the integrand has unit magnitude, giving the trivial bound
  $|I_1+I_3| \le 2(R-1)\le R^2-1$. Using \eqref{R} and $(\xi/2\freq)^2 \ge 9/4$
  we see that this is bounded
  by $(36/5)(\freq/\xi)^2 < 9(\freq/\xi)^2$, giving
  the first term in \eqref{EShat4}.

  On the upper semicircle we can limit
  the vertical exponential growth rate of $e^{\sqrt{1-z^2}}$ via
  \be
  \re \sqrt{1-z^2} \; \le \; (1-R^{-2})^{-1/2} \im z ~, \quad
  \mbox{ for all $z$ with $|z|=R$ and $\im z \ge 0$}~.
  \label{circ}
  \ee
  This is proven by setting $z=\sqrt{R^2-b^2}+ib$ and $\sqrt{1-z^2} = p+iq$,
  so that $\re (1-z^2) = p^2-q^2 = 1-R^2-2b^2$ and
  $\im (1-z^2) = 2pq = -2b\sqrt{R^2-b^2}$.
  Eliminating $q$ then solving the quadratic
  equation for $p^2$ gives
  $2p^2 = 1-R^2+2b^2 + \sqrt{(R^2-1)^2 + 4b^2}$.
  Applying the inequality $\sqrt{A^2+B^2}\le A + B^2/2A$ for $A>0$ gives
  after simplification $2p^2 \le 2b^2/(1-R^{-2})$, which
  is equivalent to \eqref{circ}.

  Applying \eqref{circ} to $I_2$, the integrand magnitude
  obeys
  $|e^{\freq \sqrt{1-z^2} + i\xi z}| \le e^{[\beta(1-R^2)^{-1/2} - \xi] \im z} =
  e^{-(\xi/2)\im z}$.
  This explains the choice \eqref{R}:
  it limits the growth rate of $e^{\freq \sqrt{1-z^2}}$ to at most half of the
  decay rate of $e^{i\xi z}$, so that decay wins.
  Now parametrizing the quarter-circle and using $\sin\theta \ge 2\theta/\pi$
  in $0\le\theta\le \pi/2$, as in the proof of Jordan's lemma, 
  $$
  |I_2| \le 2 \int_0^{\pi/2} e^{-(\xi/2) R \sin\theta} R d\theta
  \le 2R \int_0^{\pi/2} e^{-(R\xi/\pi) \theta} d\theta
  \le 2\frac{3}{\sqrt{5}}\frac{\pi}{\xi} \le \frac{9}{\xi}~,
  $$
  giving the second term in \eqref{EShat4}.
\end{proof}

\section{Phased sinc sums and proof of the main theorem}
\label{s:pf}

Firstly, to handle Fourier tails due to (exponentially small) discontinuities
at the edge of the support of $\phi$, we need the following
technical lemmas.
The first lemma bounds a conditionally convergent sinc sum,
but needs the second lemma which uniformly bounds the difference between an
exponential sum and a sinc function.
Recall the definition $\sinc x := (\sin x)/x$ for $x\neq 0$, or 1
otherwise.
  
\begin{lem}[phased sinc sum]  %
  Fix $n>0$ and $\rat>1$.
  Then there is a constant $C$
  such that for all $b\ge 1$,  $x\in\RR$, $|k|\le n/2\sigma$, and $\al>0$,
  \be
  \left|
  \sum_{|m|>b} \frac{\sin \al (mn+k)}{mn+k} e^{i(mn+k)x}
  \right|
  \; \le \; C\frac{\log b}{n}~.
  \label{phasedsinc}
  \ee
\label{l:Fourmont}
\end{lem}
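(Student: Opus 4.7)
My plan is to reduce the sinc sum to tail sums of pure exponentials. Writing $\sin\theta=(e^{i\theta}-e^{-i\theta})/(2i)$, the sum of interest equals $\frac{1}{2i}\bigl[T_b(x+\alpha)-T_b(x-\alpha)\bigr]$, where
\[
T_b(y) \;:=\; \sum_{|m|>b}\frac{e^{i(mn+k)y}}{mn+k}.
\]
It therefore suffices to bound $|T_b(y)|$ by $C\log b/n$ uniformly in $y\in\RR$ and in $k$ with $|k|\le n/(2\sigma)$.

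For $T_b(y)$, my first step is to split the denominator as $\frac{1}{mn+k}=\frac{1}{mn}-\frac{k}{mn(mn+k)}$. The second fraction is $O(m^{-2})$, absolutely summable---using the hypothesis $|k|\le n/(2\sigma)$ together with the consequent bound $|mn+k|\ge|m|n/2$ for $|m|\ge 1$---so it contributes only $O(1/(nb))$, well below target. The remaining main term is $\frac{e^{iky}}{n}\sum_{|m|>b}e^{imny}/m$, which after pairing $m$ with $-m$ collapses to $\frac{2i\,e^{iky}}{n}\sum_{m>b}\sin(mny)/m$. The problem is thus reduced to bounding $\sum_{m>b}\sin(mz)/m$ uniformly in $z\in\RR$ by $O(\log b)$.

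For this classical tail I would use Abel summation against the Dirichlet-kernel partial sums $D_M(z):=\sum_{m=1}^M e^{imz}$, which satisfy $|D_M(z)|\le 1/|\sin(z/2)|$. Combined with the $O(m^{-2})$ differences of $1/m$, this gives $|\sum_{m>b}\sin(mz)/m|\le C/(b|\sin(z/2)|)$, an $O(1/n)$ contribution to $T_b$ whenever $|\sin(ny/2)|\ge 1/b$. In the complementary near-resonance regime $|\sin(ny/2)|<1/b$, this Dirichlet bound is useless, and the second announced lemma---comparing the truncated exponential sum to its continuous sinc analogue---is required to control the first $\sim b$ coherent terms of the harmonic sum $\sum 1/m$. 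This is precisely where the factor $\log b$ enters the estimate.

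The main obstacle will be making the argument genuinely uniform across the resonant points $ny\equiv 0\pmod{2\pi}$, at which each individual $T_b(y)$ fails to be bounded. One can either exploit cancellation in the difference $T_b(x+\alpha)-T_b(x-\alpha)$---which is delicate when only one of the two arguments is near a resonance---or handle each $T_b$ in isolation via the sinc-comparison lemma. Either way, care is required that the resulting estimate be uniform simultaneously in $x$, $\alpha$, and $k$. The $\log b$ factor appears intrinsic to the approach: on a neighbourhood of a resonance of size $\lesssim 1/(nb)$, the first $\sim b$ terms of $\sin(mz)/m$ add coherently with magnitudes $\sim 1/m$, producing a $\log b$ contribution that no amount of oscillatory cancellation beyond the cutoff can remove.
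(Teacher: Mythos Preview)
Your route is genuinely different from the paper's and, once the last step is fixed, actually gives the sharper bound $C/n$. The paper never splits $\sin$ into exponentials or forms $T_b(y)$. Instead it applies Poisson summation with the top-hat indicator of $[-\alpha,\alpha]$ to recognize the \emph{full} phased sinc sum $2\alpha\sum_{m\in\ZZ}\sinc(\alpha(mn+k))e^{i(mn+k)x}$ as a finite exponential sum over the fine grid, and then invokes the quadrature/Dirichlet-kernel lemma to show that this full sum, minus its $m=0$ term, is $O(1/n)$ uniformly. The desired tail $\sum_{|m|>b}$ is obtained by subtracting off the finite block $\sum_{1\le|m|\le b}$, and it is \emph{this} block---bounded termwise by $|mn\pm k|^{-1}$---that produces the harmonic factor $(\log b)/n$. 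So in the paper the $\log b$ has nothing to do with resonances; it is simply the price of a crude triangle inequality on a finite partial sum.

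Your near-resonance paragraph is where the reasoning slips. After your (correct) reduction, the task is to bound $S_b(z):=\sum_{m>b}\sin(mz)/m$ uniformly in $z$. You assert that $T_b(y)$ ``fails to be bounded'' at resonances and that near a resonance the first $\sim b$ tail terms have ``magnitudes $\sim 1/m$'', forcing a $\log b$. Neither holds. By periodicity and oddness take $z\in(0,\pi]$; when $z\le 1/b$ one has $|\sin(mz)/m|\le z\le 1/b$ for every $m$ with $b<m\le\lfloor 1/z\rfloor$, so that block sums to at most $1$, while for $m>\lfloor 1/z\rfloor$ your own Abel--Dirichlet estimate already gives $O(1)$. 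Hence $S_b(z)=O(1)$ uniformly in $z$ and $b$---this is nothing more than the classical uniform boundedness of the partial sums of the sawtooth series $\sum_m \sin(mz)/m$---and therefore $|T_b(y)|\le C/n$ with no $\log b$ whatsoever. The quadrature lemma is not needed in your approach, and the $\log b$ factor is not intrinsic to the problem; it is an artifact of the paper's subtraction step.
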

\begin{proof}
  We apply Poisson summation \eqref{pois}
with grid spacing $h=2\pi/n$ to
  the top-hat function $s(x) = 1$ in $|x|\le \al$, zero otherwise.
  Since $\hat s(k) = 2\al \sinc (\al k)$, it gives
  $$
  h \!\!\sum_{|x-lh|\le \al} \hspace{-2.5ex}{\vphantom{\sum}}'  %
  \;e^{ikhl}  
  \;=\;
  2\al \sum_{m\in\ZZ} \sinc (\al (mn+k)) e^{i(mn+k)x}
  ~,\qquad k, x\in\RR, \; \al>0,
  $$
  where the prime on the sum indicates that any extremal terms
  where $|x-lh|=\al$ are to be given half their weight.
  Subtracting the $m=0$ term from both sides gives
  \be
  h \!\!\sum_{|x-lh|\le \al} \hspace{-2.5ex}{\vphantom{\sum}}'  %
  \;e^{ikhl} \;-\; 2\al\sinc (\al k) e^{ikx}
  \;=\;
  2 \al \sum_{m\neq 0}\sinc (\al (mn+k)) e^{i(mn+k)x}
  ~.
  \label{mneq0}
  \ee
  Following an idea of Fourmont \cite[Lemma~2.5.4]{fourmontthesis},
  we use a triangle inequality on \eqref{phasedsinc},
  $$
  \left|
  \sum_{|m|> b} \! \frac{\sin \al (mn+k)}{mn+k} e^{i(mn+k)x}
  \right|
   \le
  \left|
  \sum_{m\neq 0} \frac{\sin \al (mn+k)}{mn+k} e^{i(mn+k)x}
  \right|
  +
    \left|
  \sum_{1\le|m|\le b} \!\!\! \frac{\sin \al (mn+k)}{mn+k} e^{i(mn+k)x}
  \right|.
  $$
  We bound the first term by applying Lemma~\ref{l:quadr} %
  to the left-hand side of \eqref{mneq0}
  to get $C/n$, where $C$ is independent of $\al$, $x$, and $k$ in its
  allowed domain,
  and bound the second term via the harmonic sum
  $|\sum_{m=1}^b (k\pm mn)^{-1}| \le C (\log b)/n$,
  which holds since $|k|\le n/2$. The second term
  dominates. %
\end{proof}

\begin{lem}%
  Let $\rat>1$, $n>0$, and $h=2\pi/n$. Then there is a constant $C$ such that
  \be
  \biggl| h \!\! \sum_{|x-lh|\le \al} \hspace{-2.5ex}{\vphantom{\sum}}'
  \;e^{ikhl} \;-\; 2 \al \sinc (\al k) e^{ikx} \biggr|
  \; \le \; Ch
  ~, \qquad \mbox{ for all } x\in\RR, \; |k|\le \frac{n}{2\rat}, \; \al>0~.
  \label{quadr}
  \ee
  \label{l:quadr}
\end{lem}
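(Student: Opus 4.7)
The plan is to recognize the left-hand side as the rectangle-rule quadrature approximation on the grid of spacing $h$ to the integral
\begin{equation*}
\int_{x-\alpha}^{x+\alpha} e^{iky}\,dy \;=\; \frac{e^{ik(x+\alpha)}-e^{ik(x-\alpha)}}{ik} \;=\; 2\alpha\sinc(\alpha k)\,e^{ikx},
\end{equation*}
and to bound the quadrature error directly by evaluating the finite sum as a geometric series and comparing with this closed form endpoint-by-endpoint.

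First I would dispose of the trivial regime $2\alpha<h$, where the primed sum contains at most one (possibly half-weighted) term of magnitude $\le h$ while $|2\alpha\sinc(\alpha k)|\le 2\alpha<h$, so the triangle inequality gives an $O(h)$ bound immediately.

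For the main case $2\alpha\ge h$ and $k\ne 0$, let $l_-$ and $l_+$ be the smallest and largest integers in $[(x-\alpha)/h,(x+\alpha)/h]$. The geometric-series identity yields
\begin{equation*}
h\sum_{l=l_-}^{l_+} e^{ikhl} \;=\; \frac{h}{e^{ikh}-1}\bigl(e^{ikh(l_++1)}-e^{ikhl_-}\bigr),
\end{equation*}
which differs from the primed sum by at most $h$ (the half-weight correction when $x\pm\alpha$ happens to land on the grid). Subtracting from this expression $\tfrac{1}{ik}(e^{ik(x+\alpha)}-e^{ik(x-\alpha)})$ and rearranging,
\begin{equation*}
\Bigl(\tfrac{h}{e^{ikh}-1}-\tfrac{1}{ik}\Bigr)\bigl(e^{ikh(l_++1)}-e^{ikhl_-}\bigr) \;+\; \tfrac{1}{ik}\bigl[(e^{ikh(l_++1)}-e^{ik(x+\alpha)})-(e^{ikhl_-}-e^{ik(x-\alpha)})\bigr],
\end{equation*}
I would bound the two pieces separately. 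For the first, the constraint $|k|\le n/2\sigma$ forces $|kh|\le\pi/\sigma<\pi$, so $e^{ikh}-1$ is bounded away from $0$ uniformly (for fixed $\sigma$), and the elementary expansion $u/(e^u-1)=1+O(|u|)$ for bounded $u$ yields $h/(e^{ikh}-1)-1/(ik)=O(h)$ uniformly in the allowed $k$; together with $|e^{ikh(l_++1)}-e^{ikhl_-}|\le 2$ this contributes $O(h)$. For the second piece, since $h(l_++1)-(x+\alpha)\in[0,h)$ and $hl_--(x-\alpha)\in[0,h)$, the Lipschitz bound $|e^{iks}-e^{ikt}|\le|k||s-t|$ forces each exponential difference to be $\le|k|h$, and the $1/|k|$ prefactor cancels to leave a total of $2h$.

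Finally I would handle $k=0$ by direct count: the number of grid points in $[x-\alpha,x+\alpha]$ is $2\alpha/h+O(1)$ (with the appropriate half-weight at any boundary hits), so $h$ times that count equals $2\alpha+O(h)=2\alpha\sinc(0)+O(h)$. The only real bookkeeping obstacle is tracking the primed half-weight convention, which uniformly costs at most $h$ and is absorbed into $C$; the quantitative ingredient that makes everything work is the denominator lower bound $|e^{ikh}-1|\ge 2|kh|/\pi$ valid whenever $|kh|<\pi$, which is exactly what the hypothesis $|k|\le n/2\sigma$ guarantees.
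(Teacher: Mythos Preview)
Your proof is correct and follows essentially the same approach as the paper: interpret the expression as the rectangle-rule error for $\int_{x-\alpha}^{x+\alpha}e^{iky}\,dy$, sum the geometric series in closed form, and exploit the hypothesis $|kh|\le\pi/\sigma<\pi$ to get the needed cancellation between an $O(|kh|)$ factor and an $O(1/|kh|)$ factor. Your add-and-subtract organization (comparing the prefactors $h/(e^{ikh}-1)$ and $1/(ik)$, then the endpoint exponentials) is slightly more direct than the paper's, which instead factors out a common geometric sum $\sum_{j=0}^{J-1}e^{ikhj}$ and bounds the residual bracket $e^{ik\delta}-e^{ikh/2}\sinc(kh/2)$, but the substance is the same.
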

\begin{proof}
  Note that $2 \al \sinc (\al k) e^{ikx} = \int_{x-\al}^{x+\al} e^{iky} dy =
  e^{ik(x-\al)} \sum_{j=0}^{J-1} e^{ikhj} \int_0^h e^{iky} dy + e_1h$, where
  $J$ is the integer nearest $2\al/h = n\al/\pi$,
  and $e_1$ is an end correction with $|e_1|\le 1$.
  Into this we will insert $\int_0^h e^{iky} dy = (e^{ikh}-1)/ik = he^{ikh/2}\sinc(kh/2)$.
  Note also that the sum in \eqref{quadr} is a quadrature rule with weights
  $h$ and one node per interval $[x-\al+hj,x-\al+h(j+1)]$,
  $j=0,\dots,J-1$,
  up to $\bigO(h)$ end corrections.
  Each node is offset from the left end of its interval by
  $\delta = \min_{l\in\mathbb{Z}, \; lh \ge x-\al} lh - (x-\al)$, thus
  $\sum_{|x-lh|\le \al}' e^{ikhl}
  = e^{ik(x-\al)} \sum_{j=0}^{J-1} e^{ikhj} e^{ik\delta} + e_2$,
  where $e_2$ is another end correction, $|e_2|\le 1$.
  Combining results so far,
  $$
   h \!\! \sum_{|x-lh|\le \al} \hspace{-2.5ex}{\vphantom{\sum}}'
   \;e^{ikhl} - 2 \al \sinc (\al k) e^{ikx}
   \;=\;
  h e^{ik(x-\al)} \bigl[e^{ik\delta} - e^{ikh/2} \sinc(kh/2) \bigr]
  \sum_{j=0}^{J-1} e^{ikhj} \;+\; (e_2-e_1)h~.
  $$
  The geometric sum is exactly $(1-e^{iJkh})/(1-e^{ikh}) = e^{i(J-1)kh/2} \sin (Jkh/2) / \sin (kh/2)$, so is bounded in size by $C/|kh|$, independently of $J$,
  since $|\sin(\theta/2)| \ge |\theta|/C$ for $|\theta|\le \pi/2$, with $C=\pi/\sqrt{2}$.
  But since $\delta\in[0,h)$, the factor in square brackets is
  bounded in size by $C|kh|$ for some $C$, over the domain
  $|kh|\le \pi/2$, cancelling the $1/|kh|$ blow-up. Thus all terms
  are uniformly bounded by $Ch$.
\end{proof}

\begin{rmk}[Interpretations of Lemma~\ref{l:quadr}] %
  The above lemma may be interpreted
as an error bound when applying a simple $\bigO(h)$-accurate
equispaced quadrature rule to $\int_{x-\al}^{x+\al} e^{iky} dy$.
Remarkably, oscillatory cancellation
makes the implied error constant independent of the domain width $2\al$.

A second interpretation is that it generalizes the little-known fact that
the {\em Dirichlet kernel} \cite[Sec.~11.10]{apostol}
$D_N(\theta) := \sum_{n=-N}^{N} e^{in\theta} =
\sin((N+\half)\theta) / \sin (\theta/2)$ is {\em uniformly} close
to its non-periodic analogue $2(N+\half)\sinc((N+\half)\theta)$,
throughout $N\in\mathbb{N}$ and $|\theta|\le \pi/2$ (say).
The simpler estimates needed for this proof are
$|\sinc((N+\half)\theta)|\le 1/(N+\half)|\theta|$ and
$|\sinc(\theta/2)-1|\le C|\theta|$.
\end{rmk}   %

Finally, we combine Lemma~\ref{l:Fourmont} with all of the results in Sec.~\ref{s:asymp}
to prove the main Theorem~\ref{t:ESerr}.

\begin{proof}
  Since $n$, $\rat$, and $\gamma$ are fixed, then
  $w$ and $\freq$ are proportional via \eqref{gam}.
  We wish to bound $\eps_\infty$, defined by \eqref{epsest},
  as $w\to\infty$, or, equivalently, as $\freq\to\infty$.
  Defining $\al := \pi w / n$ as the half-width of the scaled
  kernel,
  writing \eqref{epsest} in terms of the unscaled kernel \eqref{ES} gives
  \be
  \eps_\infty =
\frac{
  \max_{|k|\le N/2, \,x\in\RR} \left|
    \sum_{m\neq 0} \hat\phi(\al k+\pi w m) e^{i(k+mn)x}
  \right|
}{\min_{|k|\le N/2} |\hat\phi(\al k)|}
~,
\label{epsest2}
\ee
  We now denote $e^\freq$ times the sum in the numerator by
  \be
  G(k,x) := e^{\freq} \sum_{m\neq 0} \hat\phi(\xi_m) e^{i\xi_m x/\al}
  ~, \qquad \mbox{ where }\; \xi_m := \al k + \pi w m
  ~.
  \label{G}
  \ee
  The main task will be to prove that
  \be
  |G(k,x) | \; =\; \bigO(1)
  ~,\qquad \freq\to\infty~, \quad
  \mbox{ uniformly in $x\in\RR$, $|k|\le n/2\rat$~,}
  \label{Gunif}
  \ee
  which will imply that the numerator of \eqref{epsest2} is $\bigO(e^{-\freq})$.
  We will split the sum \eqref{G} into three ranges of $|m|$, then discard the
  phase information $e^{i\xi_mx/\al}$ in all but the tail,
  where it becomes crucial.

  We start be defining the closest range contribution by
  $$
  G_1(k,x) \;:=\;
  e^{\freq} \sum_{m\neq 0, \, |\xi_m|<3\freq} \hat\phi(\xi_m) e^{i\xi_m x/\al}~,
  $$
  which, using \eqref{gam}, involves at most five
  terms, independent of $\freq$.
  Since $\gamma<1$, each term has $|\xi_m/\beta|>1$ so is strictly
  above cutoff.
  Thus, applying the leading saddle point result \eqref{EShat2}, each term
  contributes magnitude $\bigO(1/\sqrt{\freq})$, so
  $G_1(k,x) = \bigO(1/\sqrt{\freq})$.

  The intermediate range contribution we define by
  $$
  G_2(k,x) \; :=\;
  e^{\freq} \sum_{3\freq\le|\xi_m|\le\freq^4} \hat\phi(\xi_m) e^{i\xi_m x/\al}~,
  $$
  which involves $\bigO(\freq^3)$ terms.
  Applying Lemma~\ref{l:EShat4} and using $\xi_m \sim c \freq m$
  where $c$ is a constant,
  $$
  |G_2(k,x)| \;\le\; C \freq^2 \sum_{m\le \freq^3} \frac{1}{\freq^2 m^2}
  + C \sum_{m\le \freq^3} \frac{1}{\freq m}
  = \bigO(1) + \bigO\left(\frac{\log \freq}{\freq}\right) = \bigO(1)~.
  $$

  The remaining tail contribution to \eqref{G} is
  $$
  G_3(k,x)  \;:=\;
  e^{\freq} \sum_{|\xi_m|\ge\freq^4} \hat\phi(\xi_m) e^{i\xi_m x/\al}~.
  $$
  We apply Lemma~\ref{l:EShat3}
(again noting the cancellation of $e^{\freq}$)
  and the triangle inequality to get,
  $$
  |G_3(k,x)| \; \le \; \left|\sum_{|\xi_m|\ge\freq^4} 2\frac{\sin \xi_m}{\xi_m}
   e^{i\xi_m x/\al} \right|
   + C \sum_{|\xi_m|\ge\freq^4} \frac{\freq}{\freq^{5/4}m^{5/4}}
   ~.
   $$
   For the first (conditionally convergent) sinc sum, we apply
   Lemma~\ref{l:Fourmont} with $b = \freq^3$,
   which bounds the term
   by $\bigO((\log \freq) / n\al) = \bigO((\log \freq) / \freq)$,
   uniformly over $|k|\le n/2\rat$ and $x\in\RR$.
   The second term is summable so is $\bigO(\freq^{-1/4})$.
   Thus $|G_3(k,x)| = \bigO(\freq^{-1/4})$.
   Since $G = G_1 + G_2+G_3$, \eqref{Gunif} is proved.
   
   The only remaining task is a
   lower bound on the denominator in \eqref{epsest2}.
   We exploit Theorem~\ref{t:EShat} below cutoff, i.e.\ \eqref{EShat1}.
   The minimum occurs at the edge of the usable band, $|k|=N/2 = n/2\rat$,
   i.e.\ $|\xi| := \al n/2\rat = \pi w/2\rat$,
   i.e.\ scaled frequency $\rho_e := \pi w/2\rat\freq = (\gamma(2\rat-1))^{-1}$
   using \eqref{gam}.
   Then the below-cutoff asymptotic \eqref{EShat1} implies an upper bound on the inverse of the denominator,
   $$
   \bigl|\hat\phi(\pi w/2\rat)\bigr|^{-1} =
   \bigO\bigl(\sqrt{\freq} e^\freq e^{-\freq\sqrt{1-\rho_e^2}}\bigr)
   =
   \bigO\bigl(\sqrt{\freq} e^\freq e^{-\pi w \sqrt{\gamma^2(1-1/\rat) - (1-\gamma^2)/4\rat^2}}\bigr)
   ~,
   $$
   after simplification.
   Inserting this as the denominator of \eqref{epsest2},
   and recalling \eqref{G}--\eqref{Gunif}, the factor $e^\freq$ is cancelled.
   Since $w$ is proportional to $\freq$, this proves \eqref{ESerr}.
\end{proof}

\section{Connections between optimal rate spreading kernels}
\label{s:optim}

Here we link the ES, KB, and PSWF kernels asymptotically,
discuss their common
exponential convergence rate, conjecture that it is optimal,
and conclude with a question.

The first connection starts with the KB kernel
and inserts the large-argument asymptotic
$I_0(y) \sim e^y/\sqrt{2\pi y}$ \cite[(10.3.4)]{dlmf},
to give, for any fixed $0<a<1$,
\be
\phi_{KB,\freq}(z) \;\sim\; \frac{e^{\freq(\sqrt{1-z^2}-1)}}{(1-z^2)^{1/4}}~,
\qquad z\in[-a,a]~, \qquad \mbox{ uniformly as }\freq\to\infty~,
\label{KBasym}
\ee
which is the ES kernel \eqref{ES} with an extra algebraic prefactor.
Experimentally dropping this prefactor in fact led us to the ES kernel
in \cite{finufft}.
The closeness of \eqref{KBasym} to KB, and the persistent algebraic
difference between each of them and ES, is shown by Fig.~\ref{f:kernel}(b,e).

In turn, KB is connected to the PSWF of order zero, $\psi_0$.
While it is frequently stated in signal processing literature that KB is a ``good'' approximation
to the PSWF \cite{kaiser,jackson91} \cite[Sec.~7.5.3]{DTSP},
we cannot find any quantification of how close in a mathematical sense
(even in, say, \cite{osipov,dunster86,dunster17}).
One definition \cite{SlepianI,osipov} of $\psi_0$
is the function with support in $[-1,1]$ with minimal $L^2$-norm (energy)
outside the frequency interval $[-\freq,\freq]$.
(Note that the PSWF parameter, usually called $c$, is set at $c=\freq$.)
Slepian \cite[(1.4)]{slepian65} derived
the large-$\freq$ asymptotics,
\be
\psi_0(z) = \left\{\begin{array}{ll}C \frac{e^{\freq\sqrt{1-z^2}}}{(1-z^2)^{1/4}}
(1+\sqrt{1-z^2})^{-1/2}\bigl(1 + \bigO(\freq^{-1})\bigr)
~,&
\freq^{-1/2} \le |z| \le 1-\freq^{-1}\\
C I_o(\freq \sqrt{1-z^2}) \bigl(1 + \bigO(\freq^{-1})\bigr)
~,&
1-\freq^{-1} \le |z| \le 1~.
\end{array}\right.
\label{Slep}
\ee
Thus for all $|z|\ge \freq^{-1/2}$, normalizing $\psi_0(0)=1$,
this matches the KB kernel asymptotic, apart from a factor
$$\sqrt{2}\,(1+\sqrt{1-z^2})^{-1/2}$$
whose range is only $[1,\sqrt{2}]$.
This asymptotic ratio function between PSWF and KB
we have not found in either signal processing or mathematics literature,
and it helps explain heuristically the similar performance of the kernels.
The inverse of their ratio is
plotted in Fig.~\ref{f:kernel}(b,e) (green dashed line).
Curiously, we find empirically
that the {\em hybrid} form (Fig.~\ref{f:kernel}(b,e), red dotted line)
\be
\psi_\tbox{SlepH}(z) =  CI_o(\freq \sqrt{1-z^2})(1+\sqrt{1-z^2})^{-1/2}
\label{SlepH}
\ee
is a much better approximation to $\psi_0$ in $[-1,1]$ than
\eqref{Slep}, with relative error uniformly $<0.2/\freq$.

Other asymptotics for the PSWF are known.
Inside the central (``turning point'') region
$|z| = \bigO(\freq^{-1/2})$,
the PSWF tends to the Gaussian
$\psi_0(z) = C e^{-\beta z^2 /2} + \bigO(\freq^{-1})$.
which has a width differing by only $4\%$ from that of the
optimal truncated Gaussian (shown in \cite[Fig.~1.1(a)]{finufft}).
However, this is less useful than the above forms because
of the inferior convergence rate of the
Gaussian discussed in the introduction.
This Gaussian limit can be derived by expansion in Hermite functions
\cite[\S 3.25]{meixner} %
\cite[Sec.~8.6]{osipov}.
Finally, WKBJ large-$\freq$ asymptotics of the PSWF
due to Dunster \cite{dunster86},
Ogilvie \cite[Sec.~4.4]{ogilvie}, and others,
uniformly cover more of $[-1,1]$, but involve changes of variable
that obscure any connection to KB or ES.

We now compare the aliasing error convergence rates of the three kernels,
which is most relevant in applications.
Remark~\ref{r:fourmont} stated that in the limit $\gamma\to1^{-}$
the rate of the ES kernel matches that of KB.
This rate $e^{-\pi w\sqrt{1-1/\rat}}$ may be traced to
the optimal $\freq$ choice \eqref{gam},
to both kernels
having Fourier tails of order $e^{-\freq}$ times their values at zero frequency
(see \eqref{Gunif} and Fig.~\ref{f:kernel}(f)),
and to the exponential-of-semicircle
form of $\hat\psi$ in the output band $|k|\le N/2$
(see \cite[Prop.~2]{fourmont} and \cite[Fig.~3.1(b)]{finufft}).
Does this rate also hold for the PSWF?
Since $\psi_0$ is a normalized eigenfunction of the projection operator
($Q_c$ in \cite{osipov}) onto the frequency band $[-\freq,\freq]$,
its eigenvalue $\mu_0\le 1$ gives the squared mass
$2\pi \int_{|\xi|<\freq}|\hat\psi_0(\xi)|^2 d\xi$,
and the remaining mass is
$1-\mu_0 =2\pi \int_{|\xi|>\freq}|\hat\psi_0(\xi)|^2 d\xi$.
It was proven by Fuchs \cite{fuchs} that
\be
1 - \mu_0 \;\sim\; 4\sqrt{\pi\freq} e^{-2\freq} ~, \qquad \freq\to\infty~,
\label{fuchs}
\ee
which shows that the $L^2$-norm of $\hat\psi_0$ outside of
$[-\freq,\freq]$ is exponentially small with rate $e^{-\freq}$,
which is indeed the same rate as in the ES and KB error bounds.
Yet the $L^2$-norm does not bound the %
sum appearing in \eqref{epsest}, so a
bound on $\eps_\infty$ for the PSWF remains, to the author's knowledge,
heuristic.
However, it strongly
suggests the following.
\begin{cnj}   %
  Fix $N$ and the upsampling factor $\rat>1$, with $n=\rat N$. Then
  $c_{\tbox{\rm optim},\rat} := \pi \sqrt{1-1/\rat}$
is the supremum of all values $c$ for which there
exists a family of kernels $\psi_w$
of support $[-\pi w/n,\pi w/n]$ with aliasing error bound \eqref{epsest}
obeying
\be
\eps_\infty = \bigO(e^{-cw})~, \qquad \mbox{ as } w\to\infty~.
\ee
\end{cnj}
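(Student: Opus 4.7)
The statement splits into two directions: (I) \emph{achievability}, i.e.\ for every $c < c_{\tbox{\rm optim},\sigma}$ there exists a kernel family $\psi_w$ with $\eps_\infty = \bigO(e^{-cw})$; and (II) \emph{optimality}, i.e.\ no rate $c > c_{\tbox{\rm optim},\sigma}$ can be attained by any family. Part (I) is already delivered by Theorem~\ref{t:ESerr} applied to the ES kernel \eqref{ES}: the exponent $\pi w \gamma\sqrt{1 - 1/\sigma - (\gamma^{-2}-1)/(4\sigma^2)}$ in \eqref{ESerr} approaches $\pi\sqrt{1-1/\sigma}\,w = c_{\tbox{\rm optim},\sigma}w$ as $\gamma \to 1^-$, so any strictly sub-optimal $c$ is attained once $\gamma$ is chosen sufficiently close to $1$. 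The mathematical content of the conjecture is therefore (II).

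For (II), I would begin with a Parseval-type lower bound on $\eps_\infty$. Since the integer frequencies $\{k + mn\}_m$ are distinct, orthogonality of $\{e^{i(k+mn)x}\}$ on $[0,2\pi]$ combined with \eqref{gkx}--\eqref{epsest} gives, for every $|k|\le N/2$,
\be
\eps_\infty^2 \;\ge\; \max_{x\in\RR} |g_k(x)|^2 \;\ge\; \frac{1}{2\pi}\int_0^{2\pi} |g_k(x)|^2\,dx \;=\; \frac{\sum_{m\neq 0}|\hat\psi(k+mn)|^2}{|\hat\psi(k)|^2} .
\label{plan:lb}
\ee
Passing to the scaled kernel $\phi(z) = \psi(\pi w z/n)$ supported on $[-1,1]$, the discrete tail sum in \eqref{plan:lb} lives at spacing $\pi w$ beginning just outside the PSWF concentration interval $[-\freq,\freq]$ with $\freq = \pi w(1 - 1/2\sigma)$. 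The Slepian--Landau--Pollak extremality---that the order-zero PSWF minimizes the out-of-band fraction $\int_{|\xi|>\freq}|\hat\phi|^2 d\xi\,/\,\|\hat\phi\|^2_{L^2}$---together with Fuchs' asymptotic \eqref{fuchs}, lower-bounds this continuous tail mass by $\sim 4\sqrt{\pi\freq}\,e^{-2\freq}\|\hat\phi\|_{L^2}^2$. Converting the continuous tail integral to the discrete sample sum at spacing $\pi w$ should be a Plancherel--Polya/Bernstein-type estimate for $|\hat\phi|^2$ (entire of exponential type $2$), costing only polynomial factors in $w$. Choosing $|k|$ near the band edge $N/2$, where $|\hat\psi(k)|$ must be upper-bounded by an appropriate exponentially-small fraction of $\|\hat\psi\|_{L^2}$, would then close the estimate to give $\eps_\infty \ge e^{-\pi w\sqrt{1-1/\sigma}\,(1 + o(1))}$.

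The main obstacle---and the reason the result remains a conjecture---lies in this last step. Since $\sqrt{1-1/\sigma} < 1 - 1/(2\sigma)$ for all $\sigma > 1$, the $L^2$ PSWF tail bound alone only delivers the weaker rate $e^{-\pi w(1 - 1/(2\sigma))}$; the sharper $e^{-\pi w\sqrt{1-1/\sigma}}$ arises for the ES, KB, and PSWF kernels because $|\hat\psi|$ decays from its peak at $\xi=0$ down to the band edge $|\xi|=N/2$ along an exponential of a quarter-ellipse, \emph{losing exactly the factor} $e^{-\pi w[(1 - 1/(2\sigma)) - \sqrt{1-1/\sigma}]}$ between peak and edge. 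A rigorous proof of (II) therefore requires a pointwise (not merely $L^2$) extremal characterization of the PSWF: essentially, that up to polynomial factors the PSWF also solves the two-sided problem $\inf_\phi \sum_{m\neq 0}|\hat\phi(\xi_0 + \pi w m)|^2 / |\hat\phi(\xi_0)|^2$ subject to $\textrm{supp}\,\phi \subseteq [-1,1]$. I would attempt this via a Lagrange-multiplier argument, whose Euler--Lagrange equation should again yield a PSWF-like eigenvalue problem, but neither Slepian's original work nor the more recent asymptotic analyses of \cite{dunster86,osipov} appear to furnish the pointwise extremum needed to seal the conjecture; this is why the author classifies the statement as a conjecture rather than a theorem.
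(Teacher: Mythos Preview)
The statement is labeled a \emph{Conjecture} in the paper, and the paper offers no proof---only heuristic support: the Fuchs asymptotic \eqref{fuchs} for the PSWF tail mass, the coincidence of the ES, KB, and PSWF rates, and the empirical observation that numerical kernel optimization has not improved on them. You correctly recognize this, and your Part~(I) (achievability) is exactly the argument the paper itself has in mind: Theorem~\ref{t:ESerr} with $\gamma\to 1^-$ shows that every $c<c_{\tbox{\rm optim},\rat}$ is attained by the ES family.

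Your outline for Part~(II) goes well beyond what the paper attempts, and the Parseval lower bound on $\eps_\infty^2$ via the $L^2$-orthogonality of the aliased exponentials is a sound opening move. The genuine gap, which you locate yourself, is in the step ``choosing $|k|$ near the band edge $N/2$, where $|\hat\psi(k)|$ must be upper-bounded by an appropriate exponentially-small fraction of $\|\hat\psi\|_{L^2}$.'' For an \emph{arbitrary} compactly supported family $\psi_w$ this is not a consequence of the support constraint: the quarter-ellipse decay of $|\hat\psi|$ across the output band is a special feature of the near-optimal kernels (ES, KB, PSWF), not something the adversary is forced to respect. Without it, the $L^2$ tail extremality of the PSWF combined with \eqref{fuchs} yields at best the weaker lower bound $\eps_\infty \gtrsim e^{-\pi w(1-1/2\rat)}$, which falls short of $e^{-\pi w\sqrt{1-1/\rat}}$ by precisely the band-edge factor you describe. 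Your proposed remedy---a pointwise two-sided extremal problem whose Euler--Lagrange equation would again single out a PSWF-type object---is a reasonable line of attack, but as you note, the existing PSWF literature does not supply the needed pointwise extremum. So your assessment agrees with the paper's: direction~(II) is open, and that is why the statement is a conjecture.
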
 %
This is consistent with the fact that numerical kernel optimization
has produced only minimal reduction in errors relative to known kernels \cite{jackson91,fessler,l2jacob}
(unless special assumptions on the power spectrum $f_k$ are made \cite{nestler}).
The fact that the three kernels share this optimal rate
tells us that their differing algebraic factors are
surprisingly irrelevant for frequency localization.

\begin{rmk}[Fast look-up tables for any smooth kernel] %
  The ease of numerical evaluation of a kernel is often claimed to be
  decisive in its choice:
  this was used to justify the KB over the PSWF kernel
  \cite{kaiser,kaiserinterview,jackson91,fourmont,l2jacob},
  and %
  the ES over the KB \cite{finufft}.
  However, look-up tables of coefficients of piecewise high-order
  polynomial interpolants
  evaluated via Horner's rule, and modern open-source compiler vectorization,
  as used in FINUFFT \cite[Sec.~5.3]{finufft}, mean that
  any smooth kernel with optimal rate, including the PSWF, could currently be
  used efficiently without loss of accuracy.
  Yet, it is possible that future hardware will have
  relatively faster ${\tt exp}$ evaluations, again making the
  simplest such kernel, ES, preferable.
\end{rmk}    %

We finish with an open-ended question:
ignoring algebraic prefactors,
do all kernels on $[-1,1]$ that share the optimal rate $c_{\tbox{\rm optim},\rat}$
have the asymptotic exponential form
$e^{\freq\sqrt{1-z^2}}$, and, if so, why?

\section*{Acknowledgments}

The author has benefited from discussions with Charlie Epstein, Jeremy Magland, Leslie Greengard, Ludvig af Klinteberg, Mark Dunster, and Daniel Potts,
and thanks Roy Lederman for MATLAB codes for accurate
PSWF evaluation.
The Flatiron Institute is a division of the Simons Foundation.

\bibliographystyle{abbrv}
\bibliography{alex}

\begin{thebibliography}{10}

\bibitem{andersson12}
F.~Andersson, R.~Moses, and F.~Natterer.
\newblock Fast {F}ourier methods for synthetic aperture radar imaging.
\newblock {\em IEEE Trans. Aerospace Elec. Sys.}, 48(1):215--229, 2012.

\bibitem{apostol}
T.~M. Apostol.
\newblock {\em Mathematical Analysis}.
\newblock Addison-Wesley, Reading, MA, 1974.

\bibitem{arras20}
P.~Arras, M.~Reinecke, and R.~Westermann.
\newblock Efficient wide-field radio interferometry response, 2020.
\newblock in preparation.

\bibitem{dftsubmat}
A.~H. Barnett.
\newblock How exponentially ill-conditioned are contiguous submatrices of the
  {F}ourier matrix?, 2020.
\newblock {\tt arXiv:2004.09643}. Submitted, {\em SIAM Rev.}

\bibitem{finufft}
A.~H. Barnett, J.~F. Magland, and L.~af~Klinteberg.
\newblock A parallel non-uniform fast {F}ourier transform library based on an
  ``exponential of semicircle'' kernel.
\newblock {\em SIAM J. Sci. Comput.}, 41(5):C479--C504, 2019.

\bibitem{beylkinnufft}
G.~Beylkin.
\newblock On the fast {F}ourier transform of functions with singularities.
\newblock {\em Appl. Comput. Harmonic Anal.}, 2:363--383, 1995.

\bibitem{boettcher}
A.~B\"ottcher and D.~Potts.
\newblock Probability against condition number and sampling of multivariate
  trigonometric random polynomials.
\newblock {\em Electron. Trans. Numer. Anal.}, 26:178--189, 2007.

\bibitem{dunster86}
T.~M. Dunster.
\newblock Uniform asymptotic expansions for prolate spheroidal functions with
  large parameters.
\newblock {\em SIAM J. Math. Anal.}, 17(6):1495--1524, 1986.

\bibitem{dunster17}
T.~M. Dunster.
\newblock Asymptotics of prolate spheroidal wave functions.
\newblock {\em J. Class. Anal.}, 11(1):1--21, 2017.

\bibitem{dutt}
A.~Dutt and V.~Rokhlin.
\newblock Fast {F}ourier transforms for nonequispaced data.
\newblock {\em SIAM J. Sci. Comput.}, 14:1369--1393, 1993.

\bibitem{MIRT}
J.~Fessler.
\newblock Michigan image reconstruction toolbox, 2016.
\newblock Available at {\tt
  https://web.eecs.umich.edu/$\sim$fessler/irt/fessler.tgz}.

\bibitem{fessler}
J.~Fessler and B.~Sutton.
\newblock Nonuniform fast {F}ourier transforms using min-max interpolation.
\newblock {\em IEEE Trans.\ Signal Proc.}, 51(2):560--574, 2003.

\bibitem{fourmontthesis}
K.~Fourmont.
\newblock {\em Schnelle {F}ourier-{T}ransformation bei nicht\"aquidistanten
  {G}ittern und tomographische {A}nwendungen}.
\newblock PhD thesis, Univ. M\"unster, 1999.

\bibitem{fourmont}
K.~Fourmont.
\newblock Non-equispaced fast {F}ourier transforms with applications to
  tomography.
\newblock {\em J. Fourier Anal. Appl.}, 9(5):431--450, 2003.

\bibitem{fuchs}
W.~H.~J. Fuchs.
\newblock On the eigenvalues of an integral equation arising in the theory of
  band-limted signals.
\newblock {\em J. Math. Anal. Appl.}, 9:317--330, 1964.

\bibitem{gimbutasgrid}
Z.~Gimbutas and S.~Veerapaneni.
\newblock A fast algorithm for spherical grid rotations and its application to
  singular quadrature.
\newblock {\em SIAM J. Sci. Comput.}, 5(6):A2738--A2751, 2013.

\bibitem{kaiserinterview}
A.~Goldstein and J.~Abbate.
\newblock Oral history: {J}ames {K}aiser.
\newblock {\tt http://ethw.org/Oral-History:James\_Kaiser}, 1997.
\newblock online; accessed 2017-04-15.

\bibitem{nufft}
L.~Greengard and J.-Y. Lee.
\newblock Accelerating the nonuniform fast {F}ourier transform.
\newblock {\em SIAM Review}, 46(3):443--454, 2004.

\bibitem{jackson91}
J.~I. Jackson, C.~H. Meyer, D.~G. Nishimura, and A.~Macovski.
\newblock Selection of a convolution function for {F}ourier inversion using
  gridding.
\newblock {\em IEEE Trans.\ Medical Imaging}, 10(3):473--478, 1991.

\bibitem{l2jacob}
M.~Jacob.
\newblock Optimized least-square nonuniform fast {F}ourier transform.
\newblock {\em IEEE Trans. Signal Process.}, 57(6):2165--2177, 2009.

\bibitem{kaiser}
J.~Kaiser.
\newblock Digital filters.
\newblock In J.~Kaiser and F.~Kuo, editors, {\em System analysis by digital
  computer}, chapter~7, pages 218--285. Wiley, 1966.

\bibitem{usingnfft}
J.~Keiner, S.~Kunis, and D.~Potts.
\newblock Using {NFFT} 3 --- a software library for various nonequispaced fast
  {F}ourier transforms.
\newblock {\em ACM Trans. Math. Software}, 36(4), 2009.

\bibitem{pynufft}
J.-M. Lin.
\newblock Python non-uniform fast {F}ourier transform ({PyNUFFT}): An
  accelerated non-{C}artesian {MRI} package on a heterogeneous platform
  ({CPU}/{GPU}).
\newblock {\em J.\ Imaging}, 4(3):51, 2018.

\bibitem{meixner}
J.~Meixner and F.~Sch\"afke.
\newblock {\em Mathieusche {F}unktionen und {S}ph\"aroidfunktionen}.
\newblock Springer, Berlin, 1954.

\bibitem{nestler}
F.~Nestler.
\newblock Automated parameter tuning based on {RMS} errors for nonequispaced
  {FFT}s.
\newblock {\em Adv. Comput. Math.}, 42:889--919, 2016.

\bibitem{nestlerPME}
F.~Nestler, M.~Pippig, and D.~Potts.
\newblock Fast {E}wald summation based on {NFFT} with mixed periodicity.
\newblock {\em J. Comput. Phys.}, 285:280--315, 2015.

\bibitem{oberhettinger}
F.~Oberhettinger.
\newblock {\em Tables of {F}ourier transforms and {F}ourier transforms of
  distributions}.
\newblock Springer-Verlag, 1990.

\bibitem{ogilvie}
K.~Ogilvie.
\newblock {\em Rigorous asymptotics for the {Lam\'e}, {M}athieu and spheroidal
  wave equations with a large parameter}.
\newblock PhD thesis, University of Edinburgh, 2016.

\bibitem{olver}
F.~W.~J. Olver.
\newblock {\em Asymptotics and special functions}.
\newblock Academic Press, New York, 1974.

\bibitem{dlmf}
F.~W.~J. Olver, D.~W. Lozier, R.~F. Boisvert, and C.~W. Clark, editors.
\newblock {\em {NIST} Handbook of Mathematical Functions}.
\newblock Cambridge University Press, 2010.
\newblock {\tt http://dlmf.nist.gov}.

\bibitem{DTSP}
A.~V. Oppenheim and R.~W. Schafer.
\newblock {\em Discrete-time signal processing}.
\newblock Pearson Higher Education, Inc., 3rd edition, 2010.

\bibitem{osipov}
A.~Osipov, V.~Rokhlin, and H.~Xiao.
\newblock {\em Prolate Spheroidal Wave Functions of Order Zero: Mathematical
  Tools for Bandlimited Approximation}, volume 187 of {\em Applied Mathematical
  Sciences}.
\newblock Springer, US, 2013.

\bibitem{potts19}
D.~Potts and M.~Tasche.
\newblock Error estimates for nonequispaced fast {F}ourier transforms, 2020.
\newblock {\tt arxiv:1912.09746v2}.

\bibitem{shamshirgar}
D.~S. Shamshirgar and A.-K. Tornberg.
\newblock Fast {E}wald summation for electrostatic potentials with arbitrary
  periodicity, 2017.
\newblock {\tt arxiv:1712.04732}.

\bibitem{slepian65}
D.~Slepian.
\newblock Some asymptotic expansions for prolate spheroidal wave functions.
\newblock {\em Journal of Mathematics and Physics}, 44(1-4):99--140, 1965.

\bibitem{SlepianI}
D.~Slepian and H.~O. Pollak.
\newblock Prolate spheroidal wave functions, {F}ourier analysis and
  uncertainty, {I}.
\newblock {\em Bell Syst.\ Tech. J.}, 40:43--64, 1961.

\bibitem{steidl98}
G.~Steidl.
\newblock A note on fast {F}ourier transforms for nonequispaced grids.
\newblock {\em Adv. Comput. Math.}, 9:337--352, 1998.

\bibitem{BART}
M.~Uecker and M.~Lustig.
\newblock {BART} toolbox for computational magnetic resonance imaging, 2016.
\newblock DOI: 10.5281/zenodo.592960. Available at {\tt
  https://mrirecon.github.io/bart/}.

\bibitem{octnufft}
K.~Zhang and J.~U. Kang.
\newblock Graphics processing unit accelerated non-uniform fast {F}ourier
  transform for ultrahigh-speed, real-time {F}ourier-domain {OCT}.
\newblock {\em Opt. Express}, 18(22):23472--87, 2010.

\end{thebibliography}
\end{document}